\newtheorem{lemma}{Lemma}
\newtheorem{theorem}{Theorem}
\newtheorem{proposition}{Proposition}
\newtheorem{remark}{Remark}
\newtheorem*{corollary*}{Corollary}
\newcommand*{\transp}{\mathsf{T}}
\newcommand*{\A}{\mathcal{A}}
\newcommand*{\B}{\mathcal{B}}
\newcommand*{\Zplus}{\mathbb{Z}_{+}}
\newcommand*{\NN}{\mathbb{N}}
\newcommand*{\one}{\mathbf{1}}
\newcommand*{\p}{{[p]}}
\newcommand*{\C}{\mathbb{C}}
\newcommand*{\ortho}{\perp}
\newcommand*{\partition}{\vdash}
\newcommand*{\sgn}{\mathrm{sgn}}
\newcommand*{\psd}{\succeq 0}
\newcommand*{\stensor}[2]{\ensuremath{\mathrm{Sym}^{#2}(#1)}}
\newcommand*{\inner}[2]{\ensuremath{\left< #1,#2\right> }}
\renewcommand*{\hom}[2]{\ensuremath{\mathrm{Hom}_G\left(#1,#2\right)}}
\newcommand*{\End}[1]{\ensuremath{\mathrm{End}_G(#1)}}
\newcommand*{\GL}[1]{\ensuremath{\mathrm{GL}(#1)}}
\newcommand*{\mat}[2]{ \ensuremath{{#2}^{#1\times#1}}}
\newcommand*{\sst}[2]{\ensuremath{\mathcal{T}_{#1,#2}}}
\title{Block diagonalization for algebra's associated with block codes}
\author{Dion Gijswijt\footnote{CWI and University of Leiden. Mailing address: CWI, Science Park 123, 1098 XG Amsterdam, The Netherlands. Email: dion.gijswijt@gmail.com.}}
\begin{document}
\maketitle
\begin{abstract}
For a matrix $\ast$-algebra $\B\subseteq \mat{m}{\C}$, denote by $\A:=\stensor{\B}{n}$ the matrix $\ast$-algebra consisting of the elements in the $n$-fold tensor product $\B^{\otimes n}$ that are invariant under permuting the $n$ factors in the tensor product. Examples of such algebras in coding theory include the Bose-Mesner algebra and Terwilliger algebra of the (non)binary Hamming cube and algebras arising in SDP-hierarchies for codes using moment matrices. 
We give a computationally efficient block diagonalization of $\A$ in terms of a given block diagonalization of $\B$. As a tool we use some basic facts from the representation theory of the symmetric group.
\newline
\newline
\textbf{Keywords:} block diagonalization, semidefinite programming, Terwilliger algebra, association scheme, representation theory, symmetric group.
\end{abstract}

\section{Introduction}
A matrix $\ast$-algebra is a set $\A\subseteq \mat{n}{\C}$ of matrices that is closed under addition, scalar multiplication, matrix multiplication and $A\mapsto A^\ast$ (taking the conjugate transpose). It is a classical result that any such algebra can be brought into block diagonal form. That is, there exists an isomorphism   
\begin{equation}
\phi:\A\to \bigoplus_{i=1}^t \mat{p_i}{\C}
\end{equation}
of $\ast$-algebras, meaning that $\phi$ is a linear bijection, $\phi(AB)=\phi(A)\phi(B)$ and $\phi(A^\ast)=\phi(A)^\ast$ for all $A,B\in \A$. 

In this paper we will be concerned with constructing in a computationally efficient way such block diagonalizations $\phi$. The motivation comes from semidefinite programming, where block diagonalization has proven to be a valuable tool since it can be used to reduce the complexity of semidefinite programs having a large group of symmetries.

Semidefinite programming is an extension of linear programming that is both very general and at the same time can be performed efficiently (up to given precision), both in theory and in practice. A (complex) semidefinite program is an optimization problem of the form 
\begin{equation}
\text{maximize } \left< C,X\right> \text{ subject to } X\psd \text{ and }\left< A_i, X\right>=b_i \text{ for $i=1,\ldots,m$,}
\end{equation} 
where $X\in \C^{n\times n}$ is an Hermitian matrix variable, $C, A_1,\ldots, A_m\in \C^{n\times n}$ are given Hermitian matrices and $b_1,\ldots, b_m$ are given real numbers. Here $X\psd$ means that $X$ is positive semidefinite and $\left< C,X\right>:=\text{trace } X^\ast C$ is the trace product. Linear programming can be viewed as the special case where the given matrices are diagonal. In the literature, it is more common to consider semidefinite programs where the matrices are real valued and symmetric rather than Hermitian. However, complex semidefinite programming is easily reduced to the real case by mapping each Hermitian matrix $A$ to $\begin{pmatrix}\mathrm{Re }A&-\mathrm{Im }A\\ \mathrm{Im }A&\mathrm{Re }A\end{pmatrix}$, see \cite{Goemans}.  

In recent years, many results have been obtained using semidefinite programming, where the underlying problem exhibits a large group of symmetries (see \cite{Vallentin} for an overview). These symmetries can be exploited to significantly reduce the computational complexity of the semidefinite program at hand. Indeed, let $\A$ be a matrix $\ast$-algebra containing the given matrices $C,A_1,\ldots,A_m$. For example, $\A$ may be the set of matrices invariant under a group of symmetries of the underlying combinatorial problem, or the matrices $C,A_1,\ldots ,A_m$ may belong to the algebra $\A$ associated with an association scheme (or coherent configuration). Then the matrix-variable $X$ can be restricted to $\A$ without changing the optimum. When an explicit $\ast$-isomorhism $\phi:\A\to \bigoplus_i \C^{p_i\times p_i}$ is known, the semidefinite program can be reduced to a semidefinite program in terms of the smaller matrices from $\bigoplus_i \C^{p_i\times p_i}$ using the fact that a $\ast$-isomorphism preserves positive semidefiniteness. When the algebra $\A$ is commutative, for instance when $\A$ is an association scheme, the optimization problem is hence reduced to a linear program in a number of variables equal to the dimension of $\A$.

When applying block diagonalization to semidefinite programs, it is necessary to have a $\ast$-isomorphism available that can be effectively computed. When $\A$ consists of the matrices invariant under the action of a group, the theory of group representations is the tool for deriving an explicit block diagonalization of the invariant algebra $\A$, see \cite{Vallentin}. 

In \cite{Gijswijt,Laurent,Schrijver} semidefinite programming is used to obtain bounds for error-correcting codes. There the symmetry reduction is essential, reducing the size of the matrices from exponential to polynomial in the size of the input. The underlying combinatorial problem involves strings of length $n$ over a finite set of symbols and the main symmetry comes from permuting the $n$ positions in a string. This leads to the consideration of algebras of a specific form.
 
Given a matrix $\ast$-algebra $\B$ and an integer $n$, the algebra $\B^{\otimes n}$ admits an action of the symmetric group $S_n$ by defining $\sigma (B_1\otimes\cdots\otimes B_n):=B_{\sigma^{-1}(1)}\otimes\cdots\otimes B_{\sigma^{-1}(n)}$ for $B_1,\ldots,B_n\in \B$, $\sigma\in S_n$ and linearly extending this action to the whole of $\B^{\otimes n}$. The set of symmetric tensors $\stensor{\B}{n}:=\{A\in \B^{\otimes n}\mid \sigma A=A \text{ for all $\sigma\in S_n$}\}$ is a matrix $\ast$-subalgebra of $\B^{\otimes n}$. 

In this paper we derive an explicit block diagonalization of $\A:=\stensor{\B}{n}$ in terms of a given block diagonalization for $\B$. The resulting block diagonalization can be computed in polynomial time from the block diagonalization of $\B$. When $\B$ and its block diagonalization are defined over the reals (rationals), also the obtained block diagonalization of $\A$ is defined over the reals (rationals), up to a scaling by square roots of rationals in the latter case. 

In Section \ref{sec:example}, we will consider some examples, including the Terwilliger algebra of the binary- and nonbinary Hamming cube taking respectively $\B=\mat{2}{\C}$ and $\B=\mat{2}{\C}\oplus \mat{1}{\C}$.

\section{Preliminaries}\label{sec:prelim}
\subsection{Notation}
We denote the natural numbers by $\NN:=\{0,1,2,\ldots\}$ and the positive integers by $\Zplus$. For $k\in \Zplus$, we denote $[k]:=\{1,\ldots,k\}$. For finite sets $S,I$ and a \emph{word} $a\in S^I$, the \emph{weight} of $a$, denoted $w(a)\in \NN^S$ is given by $w(a)_s:=|\{i\in I\mid a_i=s\}|$ and counts the number of occurences of each element $s\in S$ in the word $a$.  

Let $V$ be a vector space (say over the complex numbers) with basis $v_1,\ldots,v_k$. A basis of the $n$-fold tensor product $V^{\otimes n}$ is given by $(v_a)_{a\in [k]^n}$, where $v_a=v_{a_1}\otimes\cdots\otimes v_{a_n}$. The subspace $\stensor{V}{n}\subseteq V^{\otimes n}$ consisting of the symmetric tensors has a basis indexed by all decompositions $\mu\in \NN^k_n$ of $n$ into $k$ parts: $v_{\mu}:=\sum_{a\in [k]^n, w(a)=\mu} v_a$. 
  
\subsection{Representation theory}
In this section we recall some basic facts from the representation theory of finite groups. These results can be found in most textbooks on group representations such as the book by Fulton and Harris \cite{Fulton}. Given some finite set $S$ and a finite group $G$ acting on $S$, we derive a $\ast$-algebra isomorphism between the set of complex $S\times S$ matrices that are invariant under the action of $G$, and a direct sum of full matrix algebras. This isomorphism is expressed in terms of representations of $G$. 

Let $V$ be a vector space. All vector spaces that we consider will be finite dimensional over the field of complex numbers. By $\GL{V}$ we denote the set of all invertible linear transformations of $V$ to itself. Let $G$ be a finite group. Then $V$ is called a \emph{$G$-module} if there is a group homomorphism $\rho: G\to \GL{V}$. That is, there is an action of $G$ on $V$ such that $g(c\cdot v+c'\cdot w)=c\cdot g(v)+c'\cdot g(w)$ for all $g\in G$, $c,c'\in \C$ and $v,w\in V$.

A \emph{$G$-homomorphism} from $V$ to $W$, is a linear map $\phi:V\to W$ that respects the action of $G$: $\phi(g(v))=g(\phi(v))$ for all $v\in V$ and $g\in G$. If $\phi$ is a bijection, then $V$ and $W$ are isomorphic (as $G$-modules) and we write $V\cong W$. The set of homomorphisms from $V$ to $W$ is denoted by $\hom{V}{W}$ and we denote $\End{V}:=\hom{V}{V}$.
 
Let $\inner{\cdot}{\cdot}$ be a $G$-invariant inner product on $V$, that is $\inner{x}{y}=\inner{g(x)}{g(y)}$ for all $g\in G$ and $x,y\in V$. Such an inner product exists: take $\inner{x}{y}_G:=\sum_{g\in G}\inner{g(x)}{g(y)}$ for any inner product $\inner{\cdot}{\cdot}$. With respect to this inner product, the algebra $\End{V}$ becomes a $\ast$-algebra, where $A^{\ast}$ is the adjoint of $A\in \End{V}$. 

A module $V$ is called \emph{irreducible} if $V$ has exactly two submodules: $\{0\}$ and $V$ itself. If $W\subseteq V$ is a submodule, then also $W^\ortho:=\{x\in V\mid \inner{x}{y}=0 \text{ for all $y\in W$}\}$ is a submodule. Hence it follows that $V$ can be decomposed into pairwise orthogonal irreducible submodules.
\begin{theorem}{(Maschke's Theorem)}
Let $V$ be a $G$-module. Then $V$ has an orthogonal decomposition
\begin{equation}\label{maschke}
V=\bigoplus_{\lambda=1}^k V_\lambda,\quad V_\lambda=\bigoplus_{i=1}^{m_\lambda} V_\lambda^i,
\end{equation}
where $V_\lambda^i\cong V_\lambda^j$ for all $i,j=1,\ldots,m_\lambda$ and the $V_\lambda^1$ ($\lambda=1,\ldots,k$) are pairwise nonisomorphic irreducible submodules of $V$.
\end{theorem}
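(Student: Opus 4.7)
The plan is to prove Maschke's theorem in two stages: first establish an orthogonal decomposition into irreducible submodules, then regroup them by isomorphism type.

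For the first stage, I would induct on $\dim V$. The base case $\dim V = 0$ is vacuous. In the inductive step, if $V$ is irreducible, take $k=1$, $m_1=1$, $V_1^1 = V$. Otherwise, let $W$ be a submodule of minimal positive dimension; such a $W$ is automatically irreducible. By the observation preceding the theorem statement, $W^\ortho$ is also a submodule, and since the inner product is nondegenerate we have $V = W \oplus W^\ortho$ as an orthogonal direct sum of $G$-modules. Applying the induction hypothesis to $W^\ortho$ yields the desired orthogonal decomposition of $V$ into irreducibles.

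For the second stage, take the resulting decomposition $V = \bigoplus_j U_j$ and partition the index set by isomorphism type, declaring $j \sim j'$ when $U_j \cong U_{j'}$ as $G$-modules. For each equivalence class, labelled $\lambda = 1,\ldots,k$, let $V_\lambda$ be the orthogonal direct sum of the corresponding $U_j$, enumerated as $V_\lambda^1,\ldots, V_\lambda^{m_\lambda}$. By construction $V_\lambda^i \cong V_\lambda^j$ for all $i,j$, the decompositions $V = \bigoplus_\lambda V_\lambda$ and $V_\lambda = \bigoplus_i V_\lambda^i$ inherit orthogonality from the pairwise orthogonality of the $U_j$, and the $V_\lambda^1$ are pairwise nonisomorphic by the definition of the equivalence classes.

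The main subtle step is the $G$-invariance argument that $W^\ortho$ is itself a submodule: for $x \in W^\ortho$, $g \in G$, and $y \in W$, the computation $\inner{g(x)}{y} = \inner{x}{g^{-1}(y)} = 0$ uses both $G$-invariance of $\inner{\cdot}{\cdot}$ and the fact that $g^{-1}(y) \in W$ since $W$ is a submodule. Everything else is bookkeeping: the induction produces the fine orthogonal decomposition into irreducibles, and grouping by isomorphism class yields the statement exactly as formulated. No appeal to Schur's lemma or character theory is needed for this version of the theorem; those tools will presumably enter only when one later wants to analyze $\End{V}$ in matrix form.
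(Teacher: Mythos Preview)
Your argument is correct and is exactly the approach the paper indicates: the paper does not give a formal proof of Maschke's theorem at all, but only remarks in the sentence preceding the statement that $W^\perp$ is again a submodule and that ``hence it follows that $V$ can be decomposed into pairwise orthogonal irreducible submodules.'' Your induction on $\dim V$ together with the subsequent grouping by isomorphism type is a faithful fleshing-out of that one-line sketch.
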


The set of homomorphisms is controlled by Schur's Lemma.
\begin{theorem}{(Schur's Lemma)}
Let $V$ and $W$ be irreducible $G$-modules. Then either there is an isomorphism $\phi:V\to W$ and $\hom{V}{W}$ consists of the scalar multiples of $\phi$, or $\hom{V}{W}$ consists of the zero map only.
\end{theorem}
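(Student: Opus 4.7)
The plan is to prove the lemma in two stages: a dichotomy on a single $G$-homomorphism, followed by a uniqueness statement using that $\C$ is algebraically closed.

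First I would show that any $\phi\in\hom{V}{W}$ is either zero or an isomorphism. The kernel $\ker\phi\subseteq V$ is a submodule because for $v\in\ker\phi$ and $g\in G$ one has $\phi(g(v))=g(\phi(v))=0$; likewise the image $\phi(V)\subseteq W$ is a submodule. By irreducibility of $V$, either $\ker\phi=V$ (in which case $\phi=0$) or $\ker\phi=\{0\}$. In the latter case $\phi(V)$ is a nonzero submodule of $W$, so by irreducibility of $W$ it equals $W$. Thus $\phi$ is bijective, hence an isomorphism.

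Next I would handle the second clause: assuming an isomorphism $\phi\colon V\to W$ exists, every $\psi\in\hom{V}{W}$ is a scalar multiple of $\phi$. Given $\psi$, consider $\phi^{-1}\psi\in\End{V}$. Since $\C$ is algebraically closed, $\phi^{-1}\psi$ has an eigenvalue $\lambda\in\C$. Then $\phi^{-1}\psi-\lambda\cdot\mathrm{id}_V$ is an element of $\End{V}$ with nontrivial kernel (the $\lambda$-eigenspace is nonzero), so by the dichotomy established in the first step it must be the zero map. Therefore $\psi=\lambda\phi$, and $\hom{V}{W}$ is one-dimensional, spanned by $\phi$. Combining the two cases: if no isomorphism $V\to W$ exists then every element of $\hom{V}{W}$ is zero by the first step, and otherwise $\hom{V}{W}=\C\phi$.

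The substantive step is the second one, and the only real ingredient beyond the definition of irreducibility is that every endomorphism of a finite-dimensional complex vector space has an eigenvalue; without algebraic closure one only gets that $\End{V}$ is a division algebra over the base field rather than a copy of $\C$. Since we are working over $\C$ throughout the paper, this poses no obstacle.
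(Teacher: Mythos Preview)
Your proof is correct and is the standard textbook argument for Schur's Lemma. Note, however, that the paper does not actually supply a proof of this theorem: it is simply stated as a classical fact from representation theory (with a reference to Fulton--Harris), so there is no proof in the paper to compare against. Your write-up would serve perfectly well as the omitted proof.
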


Together, Schur's Lemma and Maschke's Theorem imply a block diagonalization $\End{V}\cong \bigoplus_{\lambda=1}^k \mat{m_\lambda}{\C}$. To make this explicit, let $U_1,\ldots, U_k$ be a complete set of irreducible submodules of $V$, say $U_\lambda\cong V_\lambda^1$ for $\lambda=1,\ldots, k$. Let $e_\lambda\in U_\lambda\setminus\{0\}$, $W_\lambda:=\{Ae_\lambda\mid A\in \End{V}\}$, and let $B_\lambda$ be an orthonormal base of $W_\lambda$. Then we obtain a block diagonalization of $\End{V}$ as follows.
\begin{theorem}\label{endV}
The map
\begin{eqnarray}
\psi:\End{V}&\to &\bigoplus_{\lambda=1}^k \mat{m_\lambda}{\C},\\
A&\mapsto& \bigoplus_{\lambda=1}^k \left(\inner{Ab}{b'}\right)_{b,b'\in B_\lambda}\nonumber
\end{eqnarray}
is a $\ast$-algebra isomorphism.
\end{theorem}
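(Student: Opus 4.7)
The plan is first to understand the spaces $W_\lambda$, then to verify each required property of $\psi$ (linearity, multiplicativity, compatibility with $\ast$, bijectivity) in turn, using the dimension equality $\dim \End{V} = \sum_\lambda m_\lambda^2$ as the closing move.

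The structural step I would carry out first: fix $G$-isomorphisms $\phi_i^\lambda : U_\lambda \to V_\lambda^i$ (which exist by Maschke's Theorem, since $U_\lambda \cong V_\lambda^1 \cong V_\lambda^i$) and set $e_\lambda^i := \phi_i^\lambda(e_\lambda) \in V_\lambda^i$. By Schur's Lemma, any $G$-homomorphism $U_\lambda \to V$ vanishes on every $V_\mu^j$ with $\mu \ne \lambda$ and is a scalar multiple of $\phi_i^\lambda$ into each $V_\lambda^i$. Applying this to the restriction $A|_{U_\lambda}$ and evaluating at $e_\lambda$, and observing conversely that every scalar choice is realized by some $A \in \End{V}$ (take $A$ to be $\phi_i^\lambda$ on $U_\lambda$ and zero elsewhere, extended by $G$-equivariance across the isotypic components), yields $W_\lambda = \mathrm{span}(e_\lambda^1,\ldots,e_\lambda^{m_\lambda})$. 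In particular $\dim W_\lambda = m_\lambda$, so $|B_\lambda| = m_\lambda$ matches the target block size. Moreover $W_\lambda$ is $\End{V}$-stable: if $b = Ce_\lambda \in W_\lambda$ and $A \in \End{V}$, then $Ab = (AC)e_\lambda \in W_\lambda$.

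Granted these structural facts, the algebra properties fall out. Linearity is immediate. For multiplicativity, the $\End{V}$-stability of $W_\lambda$ gives $Bb \in W_\lambda$, so orthonormal expansion yields $Bb = \sum_{b'' \in B_\lambda} \inner{Bb}{b''}\, b''$, whence
\[
\inner{ABb}{b'} = \sum_{b'' \in B_\lambda} \inner{Bb}{b''}\inner{Ab''}{b'} = (\psi(A)\psi(B))_{b,b'}.
\]
The $\ast$-property reduces to $\inner{A^* b}{b'} = \overline{\inner{Ab'}{b}}$, i.e.\ the entrywise conjugate transpose. For injectivity, if $\psi(A) = 0$ then $\inner{Ab}{b'} = 0$ for all $b, b' \in B_\lambda$; since $Ab \in W_\lambda$ and $B_\lambda$ is orthonormal, this forces $Ab = 0$ for every $b \in B_\lambda$, hence (expanding $e_\lambda^i$ in $B_\lambda$) $Ae_\lambda^i = 0$ for each $i$. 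As $V_\lambda^i$ is irreducible and $G$-generated by the nonzero vector $e_\lambda^i$, the $G$-equivariance of $A$ forces $A = 0$ on each $V_\lambda^i$, hence on all of $V$. Surjectivity then follows from the equal dimension count $\dim \End{V} = \sum_\lambda m_\lambda^2$, which is a routine consequence of Maschke and Schur.

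The only genuinely nontrivial step I anticipate is the structural identification of $W_\lambda$ and its $\End{V}$-stability; once those are in hand, the remaining verifications are formal. A small subtlety to keep in mind is that $B_\lambda$ is an arbitrary orthonormal basis of $W_\lambda$ rather than the distinguished basis $\{e_\lambda^i\}$, so the injectivity argument must proceed via orthonormal expansion in $B_\lambda$ before transferring the vanishing back to the generators $e_\lambda^i$.
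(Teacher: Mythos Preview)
Your proposal is correct and follows essentially the same route as the paper's proof. The paper packages the linearity, multiplicativity, and $\ast$-compatibility into the single observation that the $\lambda$-component of $\psi$ is the matrix of the restriction $A|_{W_\lambda}$ in the orthonormal basis $B_\lambda$ (implicitly using the $\End{V}$-stability of $W_\lambda$ that you spell out), and its injectivity argument is exactly your ``$ABe_\lambda=0$ forces $A\cdot V_\lambda^i=0$'' step, phrased via the extension of a $G$-isomorphism $B:U_\lambda\to V_\lambda^i$ to an element of $\End{V}$; the surjectivity by dimension count is identical.
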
 
\begin{proof}
For each $\lambda$, the map $A\mapsto \left(\inner{Ab}{b'}\right)_{b,b'\in B_\lambda}$ is a $\ast$-algebra homomorphism since it maps $A$ to its restriction on $W_\lambda$ written as a matrix with respect to an orthonormal base of $W_\lambda$. Therefore $\psi$ itself is a $\ast$-algebra homomorphism. 

To show injectivity of $\psi$, suppose that $\psi(A)=0$. Consider an arbitrary component $V_\lambda^i$ in Maschke's decomposition (\ref{maschke}) and let $B:U_\lambda\to V_\lambda^i$ be a $G$-isomorphism. Then $B$ can be viewed as an element of $\End{V}$ by setting $Bx:=0$ for all $x\in U_\lambda^\ortho$. Since $\psi(A)=0$ it follows that $ABe_\lambda=0$ and hence 
\begin{equation}
A\cdot V_\lambda^i=AB\cdot U_\lambda=AB\cdot \C Ge_\lambda=\C G\cdot ABe_\lambda=\{0\}.
\end{equation}
Since this holds for all $\lambda$ and all $i$, $A$ is the zero map.
Surjectivity follows since 
\begin{eqnarray}
\dim \End{V} &=&\dim \hom{\bigoplus_\lambda \bigoplus_{i=1}^{m_\lambda}V_\lambda^i}{\bigoplus_\mu \bigoplus_{j=1}^{m_\mu}V_\mu^j}\\
&=&\sum_{\lambda,\mu}\sum_{i=1}^{m_\lambda}\sum_{j=1}^{m_\mu}\dim\hom{V_\lambda^i}{V_\mu^j}\nonumber\\
&=&\sum_\lambda m_\lambda^2,\nonumber
\end{eqnarray}
shows that both the dimension of $\End{V}$ and the dimension of $\bigoplus_\lambda \mat{m_\lambda}{\C}$ equal $\sum_\lambda m_\lambda^2$. Hence $\psi$ is indeed a bijection.
\end{proof}

The theorem shows that once a nonzero element $e_\lambda$ in each irreducible submodule (up to equivalence) is identified, the block diagonalization of $\End{V}$ can be computed explicitly once we can evaluate the inner products $\left<AA'e_\lambda,A''e_\lambda\right>$ for any $e_\lambda$ and any $A,A',A''$ from a suitable basis of $\End{V}$.

\section{The algebra $\stensor{\mat{p}{\C}}{n}$}
Consider the set $\p^n$ of words of length $n$ with symbols in the alphabet $\p=\{1,\ldots,p\}$. For example, taking $p=2$ gives the set of all binary words of length $n$ written with symbols $1$ and $2$. The symmetric group $S_n$ acts on $\p^n$ by $(\sigma a)_i:=a_{\sigma^{-1}(i)}$ for all $a\in \p^n$, $\sigma\in S_n$ and $i=1,\ldots,n$. This induces a linear action of $S_n$ on $\C^{\p^n}$ by defining $(\sigma x)_a:=x_{\sigma^{-1}a}$ for $x\in \C^{\p^n}$. Similarly, there is an induced linear action of $S_n$ on the set of $\p^n\times\p^n$ matrices $M$, by defining $(\sigma M)_{a,b}:=M_{\sigma^{-1} a,\sigma^{-1} b}$.

In this section, we give a block diagonalization of the the matrix $\ast$-algebra
\begin{equation}
\A=\{A\in \C^{\p^n\times \p^n}\mid \sigma A=A \text { for all $\sigma\in S_n$}\}
\end{equation}
of all $S_n$-invariant matrices in $\C^{\p^n\times \p^n}$. That $\A$ is indeed a matrix $\ast$ algebra follows from the facts that $S_n$ acts linearly on $\C^{\p^n\times\p^n}$, $\sigma A^\ast=(\sigma A)^\ast$ and $\sigma (AB)=(\sigma A)(\sigma B)$ for all $\sigma\in S_n$ and $A,B\in \C^{\p^n\times\p^n}$.

For any two words $a,b\in \p^n$, define $D(a,b)\in \NN^{p\times p}$ by
\begin{equation}
(D(a,b))_{i,j}:=|\{k\mid a_k=i, b_k=j\}|.
\end{equation}
Clearly, for $a,b,a',b'\in \p^n$ we have $D(a,b)=D(a',b')$ if and only if $a'=\pi(a), b'=\pi(b)$ for some $\pi\in S_n$. Let 
\begin{equation}
P(n,p):=\{ D\in \NN^{p\times p}\mid \one^{\transp}D\one=n\}.
\end{equation}
For $D\in P(n,p)$, let $A_D\in \A$ be given by
\begin{equation}
(A_D)_{a,b}:=\begin{cases}1& \text{if\ }D(a,b)=D,\\0&\text{otherwise}.\end{cases}
\end{equation}
Observe that $A_D^\transp=A_{D^\transp}$.
\begin{proposition}
The matrices $A_D$ with $D\in P(n,p)$, form a basis for $\A$ (as a complex linear space) and the dimension of $\A$ equals $\tbinom{n+p^2-1}{p^2-1}$.
\end{proposition}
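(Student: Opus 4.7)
The plan is to verify the three standard ingredients: the $A_D$ lie in $\A$, they are linearly independent, and they span $\A$; then count $|P(n,p)|$.

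First I would observe that for any $\sigma\in S_n$ and any $a,b\in\p^n$, the definition of $D(a,b)$ is invariant under simultaneously relabeling positions, so $D(\sigma^{-1}a,\sigma^{-1}b)=D(a,b)$. Consequently $(\sigma A_D)_{a,b}=(A_D)_{\sigma^{-1}a,\sigma^{-1}b}=(A_D)_{a,b}$, so indeed $A_D\in\A$. Linear independence is immediate because the supports of the $A_D$ partition $\p^n\times\p^n$: every pair $(a,b)$ has a unique value $D(a,b)\in P(n,p)$, so $A_D$ has a $1$ at $(a,b)$ for exactly one $D$.

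For spanning, I would use the observation (stated just before the proposition) that two pairs $(a,b)$ and $(a',b')$ satisfy $D(a,b)=D(a',b')$ if and only if they lie in the same $S_n$-orbit in $\p^n\times\p^n$. Hence the orbits are precisely the sets $\{(a,b):D(a,b)=D\}$ for those $D\in P(n,p)$ which are actually attained. Since every $A\in \A$ is $S_n$-invariant, its entries $A_{a,b}$ are constant on these orbits, so $A=\sum_{D} A_{a_D,b_D}\,A_D$ for any choice of representatives $(a_D,b_D)$ of each orbit. This shows the $A_D$ span $\A$. The only small thing to check is that every $D\in P(n,p)$ is attained, i.e.\ the map $(a,b)\mapsto D(a,b)$ is surjective onto $P(n,p)$: given $D$, one constructs $(a,b)\in\p^n\times\p^n$ by concatenating, for each $(i,j)\in\p\times\p$, a block of $D_{i,j}$ positions where $a$ equals $i$ and $b$ equals $j$. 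Since $\one^{\transp}D\one=n$, this produces words of the correct length.

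Finally, the dimension count reduces to computing $|P(n,p)|$, the number of nonnegative integer $p\times p$ matrices whose entries sum to $n$. By the standard stars-and-bars argument, this equals $\binom{n+p^2-1}{p^2-1}$. The main (very mild) obstacle is just making the orbit description rigorous; once that is in place, the remaining steps are routine.
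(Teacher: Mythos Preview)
Your proof is correct and follows essentially the same approach as the paper's: linear independence from disjoint supports, and spanning from the fact that an $S_n$-invariant matrix has entries depending only on $D(a,b)$. You simply spell out details the paper leaves implicit, namely that each $A_D\in\A$, that every $D\in P(n,p)$ is actually attained (so each $A_D$ is nonzero), and the stars-and-bars count of $|P(n,p)|$.
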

\begin{proof}
The matrices $A_D$ are nonzero matrices with disjoint support and hence linearly independent. For any matrix $A\in\A$, the value of an entry $A_{a,b}$ only depends on $D(a,b)$ and is therefore a linear combination of the matrices $A_D$.  
\end{proof}

Since $\A$ is closed under multiplication, there exist numbers $c_{L,M}^N, L,M,N\in P(n,p)$ such that $A_LA_M=\sum_N c_{L,M}^N A_N$. Although we will not need the numbers $c_{L,M}^N$, we mention the following fact for completeness.
\begin{proposition}
The numbers $c_{L,M}^N$ are given by 
\begin{equation}
c_{L,M}^N=\sum_B \prod_{r,t=1}^p \tbinom{N_{r,t}}{B_{r,1,t},\ldots,B_{r,p,t}},
\end{equation}
where the sum runs over all $B\in \NN^{p\times p\times p}$ that satisfy $\sum_k B_{r,s,k}=L_{r,s}$, $\sum_k B_{k,s,t}=M_{s,t}$ and $\sum_k B_{r,k,t}=N_{r,t}$ for all $r,s,t\in \p$.
\end{proposition}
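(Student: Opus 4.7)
The plan is to give a direct combinatorial count, not to use any deeper structure theory. The starting observation is that
\[
(A_L A_M)_{a,b} \;=\; \bigl|\{c\in \p^n : D(a,c)=L,\; D(c,b)=M\}\bigr|,
\]
and since this count is $S_n$-invariant in $(a,b)$, it depends only on $N := D(a,b)$. Hence for any fixed pair $(a,b)$ with $D(a,b)=N$, the coefficient $c_{L,M}^N$ equals this cardinality. I would fix such an $(a,b)$ and enumerate the admissible $c$.

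Next I would introduce the three-index refinement: for a candidate $c$, set
\[
B_{r,s,t} \;:=\; \bigl|\{k\in[n] : a_k=r,\; c_k=s,\; b_k=t\}\bigr|,\qquad r,s,t\in\p.
\]
Summing out one index at a time recovers the three pairwise distributions, namely $\sum_t B_{r,s,t} = L_{r,s}$, $\sum_r B_{r,s,t} = M_{s,t}$, and $\sum_s B_{r,s,t} = N_{r,t}$, which are exactly the constraints appearing in the proposition. So every $c$ contributing to the count produces a $B$ satisfying those constraints.

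For the converse, fix any such tensor $B$ and count how many words $c$ realize it. For each pair $(r,t)$, the $N_{r,t}$ positions $k$ with $(a_k,b_k)=(r,t)$ (which are determined by the fixed $a,b$) must be partitioned into classes of sizes $B_{r,1,t},\ldots,B_{r,p,t}$ according to the value assigned to $c_k$. The number of such partitions is the multinomial coefficient $\binom{N_{r,t}}{B_{r,1,t},\ldots,B_{r,p,t}}$, and the choices for different $(r,t)$ are independent, so the number of $c$'s realizing $B$ is $\prod_{r,t}\binom{N_{r,t}}{B_{r,1,t},\ldots,B_{r,p,t}}$. Summing over all admissible $B$ gives the claimed formula.

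There is no real obstacle; the only point that needs care is the bookkeeping, in particular making sure the roles of the three indices of $B$ match the way $L,M,N$ appear in the proposition (i.e.\ that marginalizing over the $c$-index produces $N$ rather than $L$ or $M$, etc.), and that the multinomial in the product is indexed by the $c$-coordinate with $(r,t)$ held fixed, consistent with the factor $\binom{N_{r,t}}{B_{r,1,t},\ldots,B_{r,p,t}}$ in the stated formula.
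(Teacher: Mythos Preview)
Your argument is correct and is essentially identical to the paper's own proof: fix a pair of words with joint type $N$, introduce the triple-index counts $B_{r,s,t}$, check that the three marginal constraints recover $L,M,N$, and count realizations of a given $B$ by the product of multinomials over the $(r,t)$-fibers. The only difference is notational (you call the summed-over word $c$ while the paper calls it $b$), and your explicit remark about matching the index roles is exactly the bookkeeping the paper does implicitly.
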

\begin{proof}
For given words $a,c\in [p]^n$ with $D(a,c)=N$, the number $c_{L,M}^N$ equals the number of words $b\in [p]^n$ with $D(a,b)=L$ and $D(b,c)=M$. Fixing the number $B_{r,s,t}$ of positions $i=1,\ldots,n$ for which $a_i=r, b_i=s, c_i=t$, the number of feasible $b$ equals $0$ unless $\sum_k B_{r,s,k}=L_{r,s}$, $\sum_k B_{r,k,t}=N_{r,t}$ and $\sum_k B_{k,s,t}=M_{s,t}$ for all $r,s,t$. In the latter case the solutions $b$ are obtained by partitioning each set $S_{r,t}:=\{i\mid a_i=r,c_i=t\}$ into subsets of size $B_{r,s,t}$ for $s=1,\ldots p$ to choose the entries of $b$ on $S_{r,t}$. 
\end{proof}

\subsection{Block diagonalization of $\stensor{\mat{p}{\C}}{n}$}\label{subsec:blockdiag}
The action of $S_n$ on $\C^{\p^n}$ gives $\C^{\p^n}$ the structure of an $S_n$-module. Identifying matrices in $\C^{\p^n\times\p^n}$ with the corresponding linear maps in $\GL{\C^{\p^n}}$, the subalgebra $\A$ is identified with $\End{\C^{\p^n}}$. Using the representations of the symmetric group, we will find an explicit block diagonalization of $\A$.

A partition $\lambda$ of $n$, written $\lambda\partition n$, is a sequence of nonnegative integers $\lambda_1\geq\lambda_2\geq\cdots\geq \lambda_n$ with $n=\lambda_1+\lambda_2+\cdots+\lambda_n$. Partition $\lambda$ is said to have $k$ \emph{parts} if exactly $k$ of the numbers $\lambda_i$ are nonzero. A \emph{Ferrers diagram} of \emph{shape} $\lambda$ is an array of $n$ boxes, taking the first $\lambda_i$ boxes from the $i$-th row of an $n\times n$ matrix of boxes. The $j$-th box in row $i$ is referred to as the box in position $(i,j)$. We also number the boxes from $1$ to $n$ according to the lexicographic order on their positions: the box in position $(i,j)$ has number $\lambda_1+\cdots+\lambda_{i-1}+j$. The \emph{dual partition} $\lambda^*$ of $n$ gives the column lengths of a Ferrers diagram of shape $\lambda$: $\lambda^*_i=|\{j\mid \lambda_j\geq i\}|$.

A \emph{tableau} $t$ of shape $\lambda$ is a filling of the boxes in the Ferrers diagram with integers. Here all entries will be from the set $\p$. The number in box $k$ is denoted by $t(k)$. Fixing a partition $\lambda\partition n$, each element $a$ of $\p^n$ is identified with the tableau $t$ given by $t(k):=a_k$. For an element $\sigma\in S_n$ and a tableau $t$, we define $(\sigma t)(k):=t(\sigma^{-1}(k))$. A tableau $t$ is called \emph{semistandard} if the entries are increasing in each column and non-decreasing in each row. We denote the set of semistandard tableaux of shape $\lambda$ with entries from $[p]$ by $\sst{\lambda}{p}$. 

Given a partition $\lambda\partition n$, we can associate with $\lambda$ two subgroups of $S_n$. The group $C_\lambda$ consists of the permutations of boxes within the columns of the diagram, and the group $R_\lambda$ consists of the permutations that permute the boxes within the rows. Two $\lambda$-tableaux $t,t'$ are called \emph{(row-)equivalent}, written $t\sim t'$, when $t'=\pi t$ for some $\pi\in R_\lambda$.

Recall that the complex vector space $V=\C^{\p^n}$ is an $S_n$-module. We denote by $\chi^a\in V$ the standard basis vector corresponding to the word $a$. Then the action of $S_n$ on $V$ is given by 
\begin{equation}
\sigma (\sum_{a\in \p^n}c_a\chi^a):=\sum_{a\in \p^n}c_a\chi^{\sigma a}
\end{equation}
for all $\sigma\in S_n$ and $c_a\in \C$ for all $a\in \p^n$.

Let $\lambda$ be a partition of $n$ into at most $p$ parts. Define the tableau $t_\lambda$ by filling the positions in the $i$-th row with $i$'s. Given any tableau $t$ of shape $\lambda$, we define $e_t\in V$ by
\begin{equation}
e_t:=\sum_{\sigma\in C_\lambda} \sgn(\sigma)\sum_{t'\sim t} \chi^{\sigma(t')}. 
\end{equation}
Define $S^\lambda:=\C S_n\cdot e_\lambda$, where $e_\lambda:=e_{t_\lambda}=\sum_{\sigma\in C_\lambda}\sgn(\sigma) \chi^{\sigma(t_\lambda)}$.

\begin{theorem}\label{spechtmodules}
The $S^\lambda$, where $\lambda$ runs over all partitions of $n$ into at most $p$ parts, form a complete set of pairwise non-isomorphic, irreducible submodules of $V$.
\end{theorem}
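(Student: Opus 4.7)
The plan is to reduce the statement to the classical theory of Specht modules by first decomposing $V=\C^{\p^n}$ into $S_n$-orbit subspaces. The action of $S_n$ on $\p^n$ is transitive on each set of words of fixed weight, so $V=\bigoplus_\mu M^\mu$, where $\mu$ runs over the weights $\mu\in\NN^p$ with $\sum_i\mu_i=n$ and $M^\mu:=\mathrm{span}\{\chi^a : w(a)=\mu\}$. The stabilizer in $S_n$ of any weight-$\mu$ word is conjugate to the Young subgroup $S_\mu=S_{\mu_1}\times\cdots\times S_{\mu_p}$, so $M^\mu$ is $S_n$-isomorphic to the classical Young permutation module $M^\mu_{\mathrm{cl}}$ obtained by inducing the trivial representation of $S_\mu$ to $S_n$. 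Explicitly, sending a $\mu$-tabloid (an equivalence class of fillings by $1,\ldots,n$) to the word $a$ whose $k$-th entry is the row containing $k$ gives an $S_n$-equivariant bijection.

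Next I would invoke the standard Specht-module theory (e.g.\ from Fulton--Harris~\cite{Fulton}): for every $\lambda\partition n$, the classical Specht module $S^\lambda_{\mathrm{cl}}\subseteq M^\lambda_{\mathrm{cl}}$ is the cyclic $\C S_n$-submodule generated by the polytabloid of the row-superstandard tableau $T_\lambda$ (the tableau filled with $1,\ldots,n$ in reading order); the modules $S^\lambda_{\mathrm{cl}}$ are irreducible, pairwise non-isomorphic, and form a complete set of irreducible $S_n$-modules; Young's rule gives $M^\mu_{\mathrm{cl}}\cong\bigoplus_\lambda K_{\lambda\mu}S^\lambda_{\mathrm{cl}}$ with Kostka numbers $K_{\lambda\mu}$ that satisfy $K_{\lambda\lambda}=1$ and vanish unless $\lambda$ dominates $\mu$.

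Transporting these facts through $M^\lambda\cong M^\lambda_{\mathrm{cl}}$, the tabloid $\{T_\lambda\}$ corresponds to $\chi^{t_\lambda}$, and the column stabilizer of $T_\lambda$ coincides with the paper's $C_\lambda$, so the classical polytabloid $\sum_{\sigma\in C_\lambda}\sgn(\sigma)\{\sigma T_\lambda\}$ maps exactly to $e_\lambda$. Hence $S^\lambda=\C S_n\cdot e_\lambda$ is isomorphic to $S^\lambda_{\mathrm{cl}}$ and therefore irreducible, and distinct $\lambda$ yield non-isomorphic submodules. A short dominance argument shows that if $\mu$ has at most $p$ parts and $\lambda\trianglerighteq\mu$, then $\lambda$ has at most $p$ parts as well (else $\lambda_1+\cdots+\lambda_p<n=\mu_1+\cdots+\mu_p$); combined with Young's rule applied to $V=\bigoplus_\mu M^\mu$ this shows $V$ is a direct sum of copies of precisely those $S^\lambda$ with $\lambda$ having at most $p$ parts, and $K_{\lambda\lambda}=1$ guarantees each such $S^\lambda$ really appears.

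The main obstacle is the dictionary between the paper's tableaux (filled with repeated entries from $\p$ and identified with words $a\in\p^n$) and the classical tabloids (row-equivalence classes of fillings with distinct entries $1,\ldots,n$). Once one identifies a word $a\in\p^n$ with the tabloid whose row $i$ consists of the positions $k$ with $a_k=i$, the identification of $e_\lambda$ with a classical polytabloid is immediate and the remainder is pure bookkeeping.
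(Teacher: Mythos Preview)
The paper does not give its own proof of this theorem: it simply states that the proof is standard and refers to Sagan's book. Your sketch is correct and in fact does more than the paper does, namely it spells out the dictionary between the paper's word/tableau formalism (tableaux with repeated entries from $\p$, identified with words $a\in\p^n$) and the classical tabloid formalism, and then invokes Young's rule together with the dominance argument to pin down exactly which $S^\lambda$ occur in $V$. This translation is precisely what is needed to deduce the theorem from the standard Specht-module results in \cite{Sagan}, so your approach is the natural one and is consistent with what the paper intends by its citation.
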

The irreducible modules $S^\lambda$ are called \emph{Specht modules}. The proof is standard and can be found for example in \cite{Sagan}.

Let $D\in P(n,p)$ and let $\lambda=D^\transp\one$. If $\lambda$ is nonincreasing, we view it as a partition of $n$ and make a tableau $t(D)$ of shape $\lambda$ and weight $\mu=D\one$ as follows: the $i$-th row of $t(D)$ contains $D_{j,i}$ symbols $j$, and we make the rows of $t(D)$ non-decreasing. Observe that $D(t(D),t_{\lambda})=D$. We have the following lemma.
\begin{lemma}
Let $A=A_D\in \A$ and let $\lambda':=D^{\transp}\one$. Then
\begin{equation}
Ae_\lambda=\begin{cases}
            e_{t(D)} & \text{if } \lambda'=\lambda\\
            0 & \text{otherwise.}
           \end{cases}
\end{equation}
\end{lemma}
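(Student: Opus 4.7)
The plan is to unfold the definitions, reduce to understanding $A_D\chi^b$ for a single basis vector $b$, and then handle the two cases separately using a weight argument for the vanishing case and a row-equivalence identification for the non-vanishing case.

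First, I would expand $Ae_\lambda = \sum_{\sigma\in C_\lambda}\sgn(\sigma)\,A_D\chi^{\sigma(t_\lambda)}$ and then observe that, directly from the definition of $A_D$, one has $A_D\chi^b = \sum_{a:\,D(a,b)=D}\chi^a$ for every word $b\in\p^n$. Next, note that the column sums of $D(a,b)$ record the weight of $b$: $\sum_i D(a,b)_{i,j}=w(b)_j$, so the relation $D(a,b)=D$ forces $w(b)=D^\transp\one = \lambda'$. Since any $\sigma\in C_\lambda$ permutes positions and therefore preserves weights, $w(\sigma(t_\lambda))=w(t_\lambda)=\lambda$. If $\lambda\neq\lambda'$, then for every $\sigma\in C_\lambda$ the sum defining $A_D\chi^{\sigma(t_\lambda)}$ is empty, giving $Ae_\lambda=0$. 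That disposes of the ``otherwise'' case.

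For the case $\lambda'=\lambda$, I would first deal with $\sigma=\mathrm{id}$ by showing the set identity
\begin{equation}
\{a\in\p^n : D(a,t_\lambda)=D\} = \{t' : t'\sim t(D)\}.
\end{equation}
Indeed, since $t_\lambda(k)=j$ exactly when position $k$ lies in row $j$ of the Ferrers diagram, the entry $D(a,t_\lambda)_{i,j}$ counts the number of $i$'s that $a$ places in row $j$. The equality $D(a,t_\lambda)=D$ therefore says that, in each row $j$, the word $a$ contains exactly $D_{i,j}$ copies of $i$ for every $i$, which is precisely the multiset condition characterizing row-equivalence of $a$ with $t(D)$ (using that the $i$-th row of $t(D)$ contains $D_{j,i}$ symbols $j$, equivalently row $j$ contains $D_{i,j}$ symbols $i$). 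Thus $A_D\chi^{t_\lambda} = \sum_{t'\sim t(D)}\chi^{t'}$.

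To lift this to arbitrary $\sigma\in C_\lambda$, I would use the substitution identity $D(a,\sigma b) = D(\sigma^{-1}a,b)$, which is an immediate reindexing. Consequently, $D(a,\sigma(t_\lambda))=D$ iff $\sigma^{-1}a\sim t(D)$ iff $a=\sigma(t')$ for some $t'\sim t(D)$, giving $A_D\chi^{\sigma(t_\lambda)} = \sum_{t'\sim t(D)}\chi^{\sigma(t')}$. Summing with the signs of $\sigma$ yields exactly the definition of $e_{t(D)}$, completing the proof. The only real bookkeeping challenge is keeping the transposed indices in the definition of $t(D)$ straight while matching them to the row-equivalence condition; everything else is a direct unwinding.
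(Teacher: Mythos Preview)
Your proof is correct and follows essentially the same route as the paper's: both compute $A_D\chi^{t_\lambda}$ first (using the weight observation $D(a,b)^\transp\one=w(b)$ for vanishing and the row-equivalence identification for $\lambda'=\lambda$), then pass to general $\sigma\in C_\lambda$. The only cosmetic difference is that the paper invokes the $S_n$-invariance of $A$ to write $A\chi^{\sigma(t_\lambda)}=\sigma(A\chi^{t_\lambda})$ in one stroke, whereas you verify this directly via the reindexing identity $D(a,\sigma b)=D(\sigma^{-1}a,b)$; these are the same fact.
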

\begin{proof}
Recall that we identify $[p]^n$ with the set of all tableau of shape $\lambda$ and entries from $[p]$. For any such tableau $t$, we have $A\chi^t=\sum_{s\mid D(s,t)=D}\chi^s$. Since $D(s,t)^\transp\one=w(t)$, this sum equals zero when $w(t)\neq \lambda'$. In particular, 
\begin{equation}
A\chi^{t_\lambda}=\begin{cases}0&\text{if $\lambda\neq \lambda'$,}\\ \sum_{s\sim t(D)}\chi^s&\text{if $\lambda=\lambda'$}.\end{cases}
\end{equation}
The lemma now follows by writing out the definition of $Ae_\lambda$:
\begin{eqnarray}
Ae_\lambda&=&\sum_{\sigma\in C_\lambda} \sgn(\sigma) A\chi^{\sigma(t_\lambda)}\\
&=&\sum_{\sigma\in C_\lambda} \sgn(\sigma) \sigma(A\chi^{t_\lambda}).\nonumber
\end{eqnarray}
\end{proof}
The lemma implies that the linear space $\A e_\lambda$ is spanned by the vectors $e_t$ (where $t\in [p]^n$ is a tableau of shape $\lambda$). The following theorem selects a subset of the tableaux to obtain a basis.
\begin{theorem}
The $e_t$, $t\in\sst{\lambda}{p}$ constitute a basis of $\mathrm{span}\{e_t\mid t\in [p]^n\text{ is a tableau of shape $\lambda$\}}.$
\end{theorem}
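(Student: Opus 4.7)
The proof divides naturally into showing linear independence and spanning. A preliminary observation used throughout is that $e_{\pi t} = e_t$ for every $\pi \in R_\lambda$: the row group merely permutes the inner summation index $t' \sim t$. Hence each $R_\lambda$-orbit of tableaux yields a single $e_t$, and we may replace $t$ by its unique row-nondecreasing representative.

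For linear independence I would run a triangular leading-term argument. Equip $[p]^n$ with the lexicographic order obtained by reading boxes column-by-column (top-to-bottom within each column, columns left-to-right), and for each semistandard $t$ let $c(t) \in [p]^n$ be its reading word under this order. The key local fact is: for a semistandard $t$, the only $(\sigma, t') \in C_\lambda \times \{t' : t' \sim t\}$ with $\sigma t' = t$ is $(\mathrm{id}, t)$. To see this, note that row $1$ of a semistandard $t$ consists of the column minima (by strict column-increase), so any $\sigma^{-1}$ with $\sigma^{-1} t \sim t$ must have $t(\sigma_j(1), j) \geq t(1, j)$ for each column $j$, yet the multisets agree; summing the nonnegative differences forces equality in each column and hence $\sigma_j(1) = 1$ for all $j$. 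Iterating on rows $2, 3, \ldots$ forces $\sigma = \mathrm{id}$. Consequently $\chi^{c(t)}$ occurs in $e_t$ with coefficient $\pm 1$. A parallel top-down recovery argument shows that $\chi^{c(t)}$ is absent from $e_s$ for any semistandard $s \neq t$ of the same shape (the column-multisets of $t$ are not realizable from a row-rearrangement of $s$ in a way compatible with the semistandardness of $s$). Ordering the semistandard tableaux by $c(\cdot)$ in lex then exhibits a triangular coefficient matrix, forcing linear independence.

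For spanning I would use a straightening (Garnir) argument. Order the row-nondecreasing tableaux by reverse lex on the column-reading word. If $t$ is not semistandard, some column contains an out-of-order adjacent pair $t(i, j) \geq t(i+1, j)$; the Garnir shuffle relation between the top $i$ entries of column $j$ and the bottom $\lambda^*_j - i$ entries expresses $e_t$ as a signed sum of $e_{t''}$ with each $t''$ strictly earlier in the order, and iterating terminates in a combination of semistandard $e_s$. A cleaner alternative invokes a dimension count: the weight-$\mu$ subspace of the Schur module $S^\lambda$ has dimension equal to the Kostka number $K_{\lambda\mu}$, which by definition counts semistandard tableaux of shape $\lambda$ and weight $\mu$; combined with the independence established above, this forces spanning on every weight subspace.

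The main obstacle is the spanning step: verifying the Garnir shuffle relations in the presence of repeated $[p]$-valued entries, and choosing the order so each reduction strictly decreases a well-defined rank. This is the technical heart of the standard-basis theorem for Schur/Weyl modules, and follows the template of classical Specht-module arguments (e.g., Sagan, Chapter~2) adapted from $[n]$-entries to $[p]$-entries.
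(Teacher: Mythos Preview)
The paper does not give a proof of this theorem at all; it simply states that ``the proof is standard and can be found for example in \cite{Sagan}.'' Your proposal supplies exactly the kind of argument that reference contains: a leading-term (triangularity) argument for linear independence, and Garnir straightening (or the Kostka dimension count) for spanning. So there is no alternative approach in the paper to compare against---you have filled in what the paper deliberately left out, along the standard lines.

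One small point worth tightening. In the independence step you assert that $\chi^{t}$ is \emph{absent} from $e_s$ for \emph{every} semistandard $s\neq t$, and justify this only by a vague remark about column-multisets. That is stronger than what the usual argument actually establishes, and stronger than what you need. The clean statement is the triangular one you end with: for semistandard $s$, the lexicographically smallest column-reading word among all $\sigma s'$ (with $\sigma\in C_\lambda$, $s'\sim s$) is $c(s)$ itself, occurring with coefficient $+1$. Hence $\chi^{t}$ can appear in $e_s$ only when $c(s)\leq_{\mathrm{lex}} c(t)$, and the matrix of coefficients is unitriangular. Your row-by-row ``column minima'' argument proves precisely this leading-term statement for $s=t$; for $s\neq t$ you should invoke the same ordering rather than claim outright absence. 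This does not affect your conclusion, only the phrasing of the intermediate claim.
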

The proof is standard and can be found for example in \cite{Sagan}.

In general the basis $\{e_t\}$ ($t\in\sst{\lambda}{p}$) of $\A e_\lambda$, is not orthonormal. However, let $B:=(e_t)_t$ be the matrix with the $e_t$ as columns and let $G_\lambda:=B^\transp B$ be the Gram matrix of the $e_t$. Take a Cholesky decomposition $(G_\lambda)^{-1}=:R_\lambda R_\lambda^{\transp}$, then the columns of $BR_{\lambda}$ form an orthonormal base of $\A e_\lambda$. 

\begin{theorem}
The map
\begin{eqnarray}
\psi:\A&\to& \bigoplus_{\lambda}\mat{m_\lambda}{\C}\\
A&\mapsto&\bigoplus_{\lambda}R_\lambda^{\transp}\left(\inner{Ae_s}{e_{t}}\right)_{s,t} R_\lambda\nonumber
\end{eqnarray}
is a $\ast$-isomorphism.
\end{theorem}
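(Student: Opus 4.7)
The plan is to deduce the theorem directly from Theorem \ref{endV}, applied to the $S_n$-module $V = \C^{\p^n}$ under the identification of $\A$ with $\End{\C^{\p^n}}$. By Theorem \ref{spechtmodules}, a complete list of irreducible submodules (up to isomorphism) of $V$ is given by the Specht modules $S^\lambda$, indexed by partitions $\lambda$ of $n$ into at most $p$ parts, and each $S^\lambda$ is generated by the explicit nonzero vector $e_\lambda$. Since Theorem \ref{endV} allows the nonzero element of $U_\lambda$ to be chosen arbitrarily, I would use this same $e_\lambda$, so that the corresponding space $W_\lambda := \A e_\lambda$ is, by the preceding lemma and theorem, spanned by the basis $\{e_t : t \in \sst{\lambda}{p}\}$.

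Since this basis is in general not orthonormal, I would orthonormalize it exactly as indicated just before the theorem. The Gram matrix $G_\lambda = B^\transp B$ is real (each $e_t$ has entries in $\{-1,0,1\}$), symmetric, and positive definite (the $e_t$ are linearly independent), so the Cholesky decomposition $G_\lambda^{-1} = R_\lambda R_\lambda^\transp$ exists with $R_\lambda$ real. A direct check gives $(BR_\lambda)^\transp(BR_\lambda) = R_\lambda^\transp G_\lambda R_\lambda = I$, so the columns of $BR_\lambda$ form an orthonormal basis $B_\lambda$ of $W_\lambda$, of the type required by Theorem \ref{endV}.

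It then remains only to rewrite the $\lambda$-block of the isomorphism from Theorem \ref{endV} in terms of the (non-orthonormal) $e_t$-basis. Writing $f_i = \sum_s (R_\lambda)_{s,i}\, e_s$ for the $i$-th column of $BR_\lambda$, bilinearity of the inner product together with the reality of $R_\lambda$ gives
\begin{equation}
\bigl(\inner{A f_i}{f_j}\bigr)_{i,j} \;=\; R_\lambda^\transp\, \bigl(\inner{A e_s}{e_t}\bigr)_{s,t}\, R_\lambda.
\end{equation}
Substituting this into the formula from Theorem \ref{endV} yields precisely the map $\psi$ of the present theorem, and Theorem \ref{endV} then immediately implies that $\psi$ is a $\ast$-algebra isomorphism. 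There is no deep obstacle to overcome: the proof is essentially a bookkeeping exercise that aligns the general framework of Theorem \ref{endV} with the concrete Specht-module realization. The one point that deserves explicit mention is that $R_\lambda$ can be taken real, so that the conjugation that would appear in a general Hermitian orthonormalization drops out of the final formula.
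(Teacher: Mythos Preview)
Your proposal is correct and follows exactly the paper's own approach: the paper's proof is a one-line appeal to Theorem~\ref{endV}, and you have simply unpacked that appeal in detail (identifying the $e_\lambda$ as the distinguished vectors, the $e_t$ as a basis of $W_\lambda=\A e_\lambda$, and verifying that passing to the orthonormal basis $BR_\lambda$ produces the stated formula). One very minor inaccuracy: the coefficients of $e_t$ are integers but need not lie in $\{-1,0,1\}$; this does not affect your argument, since all you use is that the Gram matrix is real and positive definite.
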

\begin{proof}
This follows directly from Theorem \ref{endV} since the $e_t$, $t\in \sst{\lambda}{p}$ form a basis of the space $\A e_\lambda$, where the $e_\lambda$ generate the irreducible submodules of $V$.
\end{proof}

\begin{remark}
Although the vectors $e_t$ have length exponential in $n$, we can compute their inner products efficiently (see next section). This implies that we can find the Gram matrices $G_\lambda$ efficiently. However, in applications to semidefinite programming, we only need $\psi$ to preserve positive semidefiniteness. Hence we can neglect the matrices $R_\lambda$ and obtain a bijection
\begin{eqnarray}
\psi':\A&\to& \bigoplus_{\lambda}\mat{m_\lambda}{\C}\\
A&\mapsto&\bigoplus_{\lambda}\left(\inner{Ae_s}{e_{t}}\right)_{s,t}\nonumber
\end{eqnarray}
that preserves positive semidefiniteness.
\end{remark}

\subsection{Computing $\psi$}
In this section, we wil show how the map $\psi$ can be computed efficiently. That is, given a partition $\lambda$ of $n$ into at most $p$ parts, and given semistandard tableaux $s,t\in\sst{\lambda}{p}$ of shape $\lambda$, we compute for every $D\in P(n,p)$, the inner product $\inner{A_De_s}{e_t}$.

For $A\in\C^{[p]^2\times [p]^n}$ the linear map $\A\to \C$ given by $A_D\mapsto \inner{A_D}{A}$ is conveniently expressed using polynomials in the $p^2$ variables $x_{i,j},\ i,j=1,\ldots,p$: 
\begin{equation}
f_A:=\sum_{D\in P(n,p)} x^D \inner{A_D}{A},
\end{equation}
where the shorthand notation $x^D:=\prod_{i,j=1}^p x_{i,j}^{D_{i,j}}$ is used.

Define for $k=0,1,\ldots,p$ the polynomials $Q_k$ by setting $Q_0(x):=1$ and  
\begin{equation}
Q_k(x):=k! \det \begin{pmatrix}x_{1,1}&\cdots&x_{1,k}\\ \vdots&\ddots&\vdots\\x_{k,1}&\cdots&x_{k,k}\end{pmatrix}
\end{equation}
for $k=1,2,\ldots,p$. The polynomials $Q_k$ have at most $p!$ terms and can, for fixed $p$, be computed in constant time. Given a partition $\lambda\partition n$ into at most $p$ parts, define the polynomial $P_\lambda$ by 
\begin{equation}
P_\lambda:=\prod_{i=1}^n Q_{\lambda^*_i}=Q_1^{\lambda_1-\lambda_2}\cdots Q_{p-1}^{\lambda_{p-1}-\lambda_p}\cdot Q_p^{\lambda_p}.
\end{equation}
For fixed $p$, computing $P_{\lambda}$ can be done in time $O(n\dim(\A))$.
\begin{proposition}
Let $\lambda\partition n$ be a partition with at most $p$ parts. Then 
\begin{equation}
P_{\lambda}=f_{e_\lambda e_\lambda^\transp}.
\end{equation}
\end{proposition}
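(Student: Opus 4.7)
The plan is to expand both sides and match them through a column-by-column factorisation of $C_\lambda$.

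First I would write out $f_{e_\lambda e_\lambda^\transp}$ directly from the definitions. Since $\inner{A_D}{e_\lambda e_\lambda^\transp}=\sum_{a,b:D(a,b)=D}(e_\lambda)_a(e_\lambda)_b$ and $x^{D(a,b)}=\prod_{k=1}^n x_{a_k,b_k}$ (because $D(a,b)_{i,j}$ is precisely the number of positions $k$ with $a_k=i,b_k=j$), summing over $D\in P(n,p)$ removes the partition into orbits and gives
\begin{equation}
f_{e_\lambda e_\lambda^\transp}=\sum_{a,b\in[p]^n}(e_\lambda)_a(e_\lambda)_b\prod_{k=1}^n x_{a_k,b_k}.
\end{equation}
Substituting $e_\lambda=\sum_{\sigma\in C_\lambda}\sgn(\sigma)\chi^{\sigma(t_\lambda)}$ reduces the sum over $a,b$ to a sum over $(\sigma,\tau)\in C_\lambda\times C_\lambda$.

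Next I would simplify the double sum. Writing $r(k)$ for the row of box $k$ in the Ferrers diagram of $\lambda$ (so $t_\lambda(k)=r(k)$), the action of $\sigma$ gives $\sigma(t_\lambda)_k=r(\sigma^{-1}(k))$. After the substitution $k\mapsto \sigma(k)$ in the product and setting $\pi:=\tau^{-1}\sigma$, the signs collapse to $\sgn(\pi)$ and the $\sigma$-sum contributes an overall factor of $|C_\lambda|$:
\begin{equation}
f_{e_\lambda e_\lambda^\transp}=|C_\lambda|\sum_{\pi\in C_\lambda}\sgn(\pi)\prod_{k=1}^n x_{r(k),r(\pi(k))}.
\end{equation}

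The main step is the factorisation across columns. Because $C_\lambda=\prod_{j=1}^{\lambda_1}S_{\text{col}_j}$ where $\text{col}_j$ is the set of boxes in column $j$ (of size $\lambda^*_j$), every $\pi\in C_\lambda$ decomposes as $\pi=(\pi_1,\ldots,\pi_{\lambda_1})$ with $\sgn(\pi)=\prod_j\sgn(\pi_j)$, and the product over $k\in[n]$ splits over columns. In column $j$, re-labelling the boxes by their row number $1,\ldots,\lambda^*_j$, the inner sum is exactly the permanent-with-signs expansion of a determinant:
\begin{equation}
\sum_{\pi_j\in S_{\lambda^*_j}}\sgn(\pi_j)\prod_{i=1}^{\lambda^*_j}x_{i,\pi_j(i)}=\det\bigl(x_{i,i'}\bigr)_{1\leq i,i'\leq \lambda^*_j}=\frac{Q_{\lambda^*_j}(x)}{\lambda^*_j!}.
\end{equation}

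Finally, using $|C_\lambda|=\prod_j \lambda^*_j!$, the factorials cancel and we obtain
\begin{equation}
f_{e_\lambda e_\lambda^\transp}=\prod_{j=1}^{\lambda_1}Q_{\lambda^*_j}(x)=\prod_{i=1}^n Q_{\lambda^*_i}(x)=P_\lambda,
\end{equation}
since $\lambda^*_i=0$ for $i>\lambda_1$ and $Q_0=1$. The only delicate bit is the reindexing that turns the $(\sigma,\tau)$-sum into a single sum over $\pi\in C_\lambda$ with an overall $|C_\lambda|$ factor; once that is done the determinant identification is automatic from the definition of $Q_k$.
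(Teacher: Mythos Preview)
Your proof is correct and follows essentially the same route as the paper's own proof: expand $f_{e_\lambda e_\lambda^\transp}$ via the definition of $e_\lambda$, collapse the double sum over $C_\lambda\times C_\lambda$ to a single sum over $C_\lambda$ using the substitution $\pi=\tau^{-1}\sigma$ (picking up the factor $|C_\lambda|$), then factor the sum column-by-column and recognise each factor as the determinant $Q_{\lambda^*_j}/\lambda^*_j!$, with the factorials cancelled by $|C_\lambda|=\prod_j\lambda^*_j!$. The only cosmetic difference is that the paper keeps the expression in the form $x^{D(\sigma t_\lambda,t_\lambda)}$ before factoring, whereas you pass through the identity $x^{D(a,b)}=\prod_k x_{a_k,b_k}$ first; the resulting column factors differ only by whether the permutation appears in the first or second subscript, which is immaterial for the determinant.
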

\begin{proof}
Writing out the definition of $A_D$ and $e_\lambda$ we obtain
\begin{eqnarray}
\sum_{D\in P(n,p)} x^D\cdot \inner{A_D}{e_\lambda e_\lambda^\transp}&=&\sum_{D\in P(n,p)} x^D \sum_{\rho,\tau\in C_\lambda} \sgn(\rho)\sgn(\tau) (A_D)_{\rho t_\lambda,\tau t_\lambda}\\
&=&\sum_{D\in P(n,p)} x^D\cdot |C_\lambda|\sum_{\sigma\in C_\lambda} \sgn(\sigma)(A_D)_{\sigma t_\lambda,t_\lambda}\nonumber\\
&=&|C_\lambda|\sum_{\sigma\in C_\lambda} \sgn(\sigma)x^{D(\sigma t_\lambda,t_\lambda)}.\nonumber
\end{eqnarray}
Here we used the substitution $\sigma:=\tau^{-1}\rho$ so that 
\begin{equation}
\sgn(\rho)\sgn(\tau) (A_D)_{\rho t_\lambda,\tau t_\lambda}=\sgn(\sigma) (A_D)_{\sigma t_\lambda,t_\lambda}.
\end{equation} 

Any $\sigma\in C_\lambda$ corresponds to an $n$-tuple $(\sigma_1,\ldots,\sigma_n)$, where $\sigma_i\in S_{\lambda_i^*}$ is the restriction of $\sigma$ to the $i$-th column. Observe that $(t_\lambda)_{i,j}=i$ and $(\sigma t_\lambda)_{i,j}=\sigma_j^{-1}(i)$ and hence 
\begin{equation}
x^{D(\sigma t_\lambda,t_\lambda)}=\prod_{j=1}^n\prod_{i=1}^{\lambda^*_j}x_{\sigma_j^{-1}(i),i}.
\end{equation}
Using this, we obtain:
\begin{eqnarray}
|C_\lambda|\sum_{\sigma\in C_\lambda} \sgn(\sigma)x^{D(\sigma t_\lambda,t_\lambda)}&=&|C_\lambda|\prod_{j=1}^n (\sum_{\sigma_j\in S_{\lambda^*_j}}\sgn(\sigma_j)\prod_{i=1}^{\lambda^*_j}x_{\sigma_j^{-1}(i),i})\\
&=&|C_\lambda|\prod_{j=1}^n \frac{1}{\lambda^*_j !}Q_{\lambda^*_j}\nonumber\\
&=&P_\lambda.\nonumber
\end{eqnarray}
\end{proof}


Since we can perform multiplication in $\A$ in polynomial time, we now have a polynomial time algorithm to compute $\inner{Ae_t}{e_{t'}}=\inner{A_{D(t',t_\lambda)}^\transp A(D) A_{D(t,t_\lambda)}}{e_\lambda e_\lambda^\transp}$ for any $t,t'\in \sst{\lambda}{p}$ and $D\in P(n,p)$, hence obtaining the map $\psi$ explicitly.

Below we show how we can speed up the computation, by avoiding the computationally costly multiplication in $\A$. 

For $i,j\in \p$, let $A_{i\to j}\in\A$ be defined by 
\begin{equation}
(A_{i\to j})_{a,b}:=
\begin{cases}
1 & \text{if there is a $k$ such that $a_k=i,b_k=j$ and $a_l=b_l$ for all $l\neq k$,}\\
0 & \text{otherwise}.
\end{cases}
\end{equation}

For $i,j\in [p]$, let $E_{i,j}$ be the $[p]\times [p]$ matrix with $(E_{i,j})_{i,j}=1$ and all other entries equal to $0$. 
\begin{proposition}\label{Aijaction}
For any $D\in P(n,p)$ we have:
\begin{eqnarray}
A_DA_{i\to j}&=&\sum_{k\mid A_{k,i}>0} (D_{k,j}+1)A_{D-E_{k,i}+E_{k,j}},\\
A_{i\to j}A_D&=&\sum_{k\mid A_{j,k}>0} (D_{i,k}+1)A_{D-E_{j,k}+E_{i,k}}.\nonumber
\end{eqnarray}
\end{proposition}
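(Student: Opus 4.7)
The plan is to verify both identities entry-wise. Expanding the first product,
\begin{equation*}
(A_D A_{i\to j})_{a,c} = \sum_{b\in\p^n} (A_D)_{a,b} (A_{i\to j})_{b,c},
\end{equation*}
a nonzero summand requires that $b$ differs from $c$ in exactly one coordinate $\ell$, with $b_\ell = i$ and $c_\ell = j$, while simultaneously $D(a,b) = D$. For fixed $a,c$ I parameterize the contributing pairs $(b,\ell)$ by the differing coordinate $\ell$ and set $k := a_\ell$.

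The key observation is how this single-coordinate edit affects the tally matrix: $D(a,b)$ and $D(a,c)$ agree on every coordinate other than $\ell$, and at position $\ell$ the former receives a count in cell $(k,i)$ while the latter receives one in cell $(k,j)$. Hence
\begin{equation*}
D(a,b) = D(a,c) + E_{k,i} - E_{k,j},
\end{equation*}
so the constraint $D(a,b) = D$ forces $D(a,c) = D - E_{k,i} + E_{k,j}$, which is a valid element of $P(n,p)$ only when $D_{k,i} > 0$.

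Conversely, given $a,c$ with $D(a,c) = D - E_{k,i} + E_{k,j}$ for some such $k$, the admissible $\ell$'s are exactly those with $a_\ell = k$ and $c_\ell = j$; by the definition of $D(a,c)$ there are $D(a,c)_{k,j} = D_{k,j} + 1$ of them. Summing over $k$ therefore yields $\sum_{k\,:\,D_{k,i}>0}(D_{k,j}+1)(A_{D - E_{k,i} + E_{k,j}})_{a,c}$, matching the first claimed identity entry-wise. The second identity follows by the mirror argument applied to $(A_{i\to j})_{a,b}$: the differing coordinate $\ell$ has $a_\ell = i,\ b_\ell = j$, and setting $k := c_\ell$ gives $D(b,c) = D(a,c) + E_{j,k} - E_{i,k}$, which upon imposing $D(b,c) = D$ pins down $D(a,c) = D - E_{j,k} + E_{i,k}$ and produces the multiplicity $D_{i,k} + 1$.

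There is no real obstacle in the argument; it is essentially a careful accounting of how a single-position edit to a word changes the tally matrix. The main point to watch is that the two identities affect $D$ differently: the first moves a unit within a fixed row (from column $i$ to column $j$), while the second moves a unit within a fixed column (from row $i$ to row $j$).
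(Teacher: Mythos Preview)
Your proof is correct and follows essentially the same entry-wise counting argument as the paper: fix the outer indices, identify the intermediate word via the single-position edit, observe how that edit shifts one unit in the tally matrix $D(\cdot,\cdot)$, and count the number of admissible positions. The only cosmetic difference is that for the second identity the paper invokes the transpose relation $A_{i\to j}A_D=(A_{D^\transp}A_{j\to i})^\transp$ to reduce to the first line, whereas you rerun the direct count; both are equally valid.
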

\begin{proof}
To prove he first line, consider two words $a,b\in\p^n$. The entry $(A_DA_{i\to j})_{a,b}$ equals the number of $c\in\p^n$ such that $D(a,c)=D$ and $b$ is obtained from $c$ by changing an $i$ into a $j$ in some position $h$. If $a_h=k$, then $D(a,b)=D-E_{k,i}+E_{k,j}$. 
When $D(a,b)=D-E_{k,i}+E_{k,j}$, the number of possible $c$ equals $D(a,b)_{k,j}=D_{k,j}+1$.

For the second line, observe that $A_{i\to j}A_D=(A_{D^\transp}A_{j\to i})^\transp$.
\end{proof}

Given a decomposition $\mu=(\mu_1,\ldots,\mu_p)$ of $n$ ($\mu_i\geq 0$), let $I_{\mu}:=A_{\mu_1E_{1,1}+\cdots+\mu_pE_{p,p}}$. 
\begin{proposition}\label{Decompose}
If $D\in P(n,p)$ is lower triangular with $\mu:=D^\transp \one$, then
\begin{equation} \left(\prod_{i>j}D_{i,j}!\right)A_D=(\prod_{j=1}^{p-1}\prod_{i=j+1}^p A_{i\to j}^{D_{i,j}}) I_{\mu}.
\end{equation}

\end{proposition}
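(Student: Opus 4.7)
I will verify both sides agree as operators on $\C^{\p^n}$ by evaluating on each standard basis vector $\chi^b$. Since $D^\transp\one = \mu$, we have $A_D\chi^b = 0$ whenever $w(b)\neq\mu$, and likewise $I_\mu\chi^b = 0$, so it suffices to fix $b$ with $w(b)=\mu$.

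Read the right-hand product from right to left and group its factors into blocks $X_j := \prod_{i=j+1}^p A_{i\to j}^{D_{i,j}}$, so the right-hand side equals $X_1 X_2 \cdots X_{p-1}\cdot I_\mu$ with $I_\mu$ applied first. The key structural observation is that each $A_{i\to j}$ acts only on positions currently equal to $j$, converting one of them to the value $i>j$; in particular $X_j$ neither creates nor destroys occurrences of any value other than $j$, and $X_{j'}$ with $j'>j$ never touches positions of value $\leq j'-1$. Hence when $X_j$ is applied (after $X_{p-1},\ldots,X_{j+1}$), it encounters precisely the $\mu_j$ positions $k$ with $b_k=j$, still equal to $j$.

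Within $X_j$, again reading right-to-left, $A_{p\to j}^{D_{p,j}}$ picks $D_{p,j}$ of these positions and sends them to $p$, then $A_{p-1\to j}^{D_{p-1,j}}$ picks $D_{p-1,j}$ of the still-$j$ positions and sends them to $p-1$, and so on down to $A_{j+1\to j}^{D_{j+1,j}}$. Expanding $A_{i\to j}^{D_{i,j}}$ as a sum over ordered picks of $D_{i,j}$ distinct positions yields $D_{i,j}!$ orderings per unordered subset, and the different targets $i$ act on disjoint subsets. Thus $X_j$ contributes the scalar $\prod_{i>j}D_{i,j}!$ times a sum indexed by ordered partitions of the $\mu_j$ type-$j$ positions of $b$ into labelled blocks of sizes $D_{j+1,j},\ldots,D_{p,j}$ (promoted), with a residual of size $D_{j,j} = \mu_j - \sum_{i>j}D_{i,j}$ (unchanged).

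Taking the product over $j$, the choices for different $j$ affect disjoint position sets (namely $\{k:b_k=j\}$), so the final sum enumerates each $a$ with $D(a,b)=D$ exactly once: by construction the number of positions with $b_k=j$, $a_k=i$ equals $D_{i,j}$ for all $i\geq j$, while no demotions occur so $D(a,b)_{i,j}=0=D_{i,j}$ for $i<j$. Combined with the prefactor this gives
\begin{equation*}
X_1\cdots X_{p-1}I_\mu\chi^b \;=\; \left(\prod_{i>j}D_{i,j}!\right)\sum_{a:\, D(a,b)=D}\chi^a \;=\; \left(\prod_{i>j}D_{i,j}!\right) A_D\chi^b,
\end{equation*}
which is the claimed identity. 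The step requiring the most care is the decoupling that lets the counting factor multiplicatively over $(i,j)$: one must verify that $X_{j'}$ with $j'>j$ leaves the set of positions of value $j$ invariant, and that within $X_j$ the successive factors $A_{p\to j}, A_{p-1\to j},\ldots$ act on a nested decreasing chain of still-$j$ positions. Once this is in place the factorial count and the identification $D(a,b)=D$ are immediate. An induction on $\sum_{i>j}D_{i,j}$ via Proposition \ref{Aijaction} is possible but more bookkeeping-heavy than the direct combinatorial evaluation above.
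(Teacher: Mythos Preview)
Your argument is correct and complete, apart from one small misstatement that does not affect the logic: you write that ``$X_j$ neither creates nor destroys occurrences of any value other than $j$'', but of course $X_j$ \emph{does} create occurrences of the target values $i>j$. What you actually need, and what you state immediately afterwards, is that $X_{j'}$ with $j'>j$ never touches positions of value $\leq j'-1$; this is correct and is exactly what guarantees that when $X_j$ is reached, the set $\{k:b_k=j\}$ is still the set of positions currently carrying the symbol $j$.

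Your route is genuinely different from the paper's. The paper argues algebraically: from Proposition~\ref{Aijaction} it extracts the one-step identity $A_{i\to j}A_{D'}=(D'_{i,j}+1)A_{D'-E_{j,j}+E_{i,j}}$ valid whenever row $j$ of $D'$ is concentrated on the diagonal, and then builds up $A_D$ from $I_\mu$ by an induction on the number of factors, checking that the required row condition is maintained at each step. Your proof bypasses Proposition~\ref{Aijaction} entirely and works at the level of basis vectors $\chi^b$, interpreting each $A_{i\to j}$ as ``pick a position currently equal to $j$ and promote it to $i$''; the factorial then drops out as the overcount of ordered versus unordered choices, and the identification of the resulting sum with $A_D\chi^b$ is immediate. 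The combinatorial approach is more self-contained and arguably makes the factorials more transparent; the paper's inductive approach is more in keeping with the surrounding algebraic machinery and reuses a proposition that is needed elsewhere anyway.
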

\begin{proof}
For given $i\neq j$ and $D\in P(n,p)$ with $D_{j,j}\geq 1$ and $D_{j,k}=0$ for all $k\neq j$, Proposition (\ref{Aijaction}) gives 
\begin{equation}\label{onestep}
A_{i\to j}A_D=(D_{i,j}+1)A_{D-E_{j,j}+E_{i,j}}.
\end{equation}
Let $(i_1,j_1),\ldots, (i_m,j_m)$ be $m$ pairs of indices with $i_k\neq j_k$ for all $k$ and $i_k>j_l$ whenever $k<l$, and let $s_1,\ldots,s_m$ be nonnegative integers. Let $D:=\mu_1E_{1,1}+\cdots+\mu_pE_{p,p}+\sum_{k=1}^m s_i(E_{i_k,j_k}-E_{j_k,j_k})$ and suppose that $D$ is nonnegative. Then from (\ref{onestep}) we obtain  
\begin{equation}
A_{i_m\to j_m}^{s_m}\cdots A_{i_1\to j_1}^{s_1}I_{\mu}=s_m!\cdots s_1! A_D,
\end{equation}
by induction on $m$. This implies the statement in the proposition. 
\end{proof}

Let us denote 
\begin{equation}
d_{i\to j}:=\sum_{s=1}^p x_{i,s}\frac{\partial}{\partial x_{j,s}}, \quad d^*_{i\to j}:=\sum_{s=1}^p x_{s,j}\frac{\partial}{\partial x_{s,i}}.
\end{equation}
In terms of polynomials, Proposition (\ref{Aijaction}) now gives

\begin{proposition} \label{Aijonpoly}
For $i\not=j$ and a matrix $A$, we have
\begin{eqnarray}
f_{A_{i\to j}A}&=&d_{i\to j} f_A\\
f_{AA_{i\to j}}&=&d^*_{i\to j} f_A.\nonumber
\end{eqnarray}
\end{proposition}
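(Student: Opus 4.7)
The plan is to verify both identities directly from the definition $f_A=\sum_D x^D\langle A_D,A\rangle$, by transferring the multiplication by $A_{i\to j}$ across the inner product and then invoking Proposition \ref{Aijaction} to rewrite the resulting product of basis elements.

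For the first identity, observe that $A_{i\to j}$ is a real $0$/$1$-matrix whose transpose is $A_{j\to i}$, so $A_{i\to j}^\ast=A_{j\to i}$. Using the trace definition of the inner product we get $\langle A_D,A_{i\to j}A\rangle=\langle A_{j\to i}A_D,A\rangle$. Then I would apply the second formula of Proposition \ref{Aijaction} (with the roles of $i$ and $j$ swapped) to expand
\[
A_{j\to i}A_D=\sum_{k:\,D_{i,k}>0}(D_{j,k}+1)\,A_{D-E_{i,k}+E_{j,k}}.
\]
Substituting into $f_{A_{i\to j}A}=\sum_D x^D\langle A_{j\to i}A_D,A\rangle$ and reindexing by $D':=D-E_{i,k}+E_{j,k}$ (so $D=D'+E_{i,k}-E_{j,k}$, $D_{j,k}+1=D'_{j,k}$, and $x^D=x_{i,k}x_{j,k}^{-1}x^{D'}$, multiplied by $D'_{j,k}$ which kills the only potentially singular term) gives
\[
f_{A_{i\to j}A}=\sum_{D'}\sum_k x_{i,k}D'_{j,k}x^{D'-E_{j,k}}\langle A_{D'},A\rangle=\sum_{D'}\sum_k x_{i,k}\tfrac{\partial}{\partial x_{j,k}}x^{D'}\langle A_{D'},A\rangle=d_{i\to j}f_A.
\]

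For the second identity the argument is symmetric. Using cyclicity of the trace, $\langle A_D,AA_{i\to j}\rangle=\langle A_DA_{j\to i},A\rangle$, and the first formula of Proposition \ref{Aijaction} (with $i,j$ swapped) gives
\[
A_DA_{j\to i}=\sum_{k:\,D_{k,j}>0}(D_{k,i}+1)\,A_{D-E_{k,j}+E_{k,i}}.
\]
The same substitution trick, now with $D'=D-E_{k,j}+E_{k,i}$, produces $\sum_k x_{k,j}\partial/\partial x_{k,i}$ acting on $f_A$, which is exactly $d^\ast_{i\to j}f_A$.

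The only mildly delicate point is the bookkeeping in the reindexing: after the substitution, the exponent $D'+E_{i,k}-E_{j,k}$ (resp.\ $D'+E_{k,j}-E_{k,i}$) may fall outside $\NN^{p\times p}$ when $D'_{j,k}=0$ (resp.\ $D'_{k,i}=0$), so one must observe that the accompanying coefficient $D'_{j,k}$ (resp.\ $D'_{k,i}$) vanishes in precisely that case, so no terms outside $P(n,p)$ need to be considered and the identity is a genuine polynomial identity. Beyond that, the proof is purely a matter of matching indices.
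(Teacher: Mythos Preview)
Your proof is correct and follows essentially the same route as the paper: transfer $A_{i\to j}$ across the inner product via the adjoint, apply Proposition~\ref{Aijaction}, and reindex the sum to recognise the differential operator. The paper only spells out the second identity and declares the first ``similar'', whereas you carry out both and explicitly handle the boundary case $D'_{j,k}=0$ (resp.\ $D'_{k,i}=0$) that the paper leaves implicit.
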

\begin{proof}
We have:
\begin{eqnarray}
f_{AA_{i\to j}}=\sum_D x^D\inner{A_DA_{j\to i}}{A}&=&\sum_D x^D\sum_{k\mid D_{k,i}>0} (D_{k,i}+1)\inner{A_{D-E_{k,j}+E_{k,i}}}{A}\\
&=&\sum_{k\mid D_{k,i}>0} \sum_D x^D(D_{k,i}+1)\inner{A_{D-E_{k,j}+E_{k,i}}}{A}\nonumber\\
&=&\sum_{k\mid D_{k,i}>0} \sum_{D'} x^{D'+E_{k,j}-E_{k,i}} D'_{k,i}\inner{A_{D'}}{A}\nonumber\\
&=&\sum_{k=1}^p \frac{x_{k,j}\partial}{\partial x_{k,i}} f_A.\nonumber
\end{eqnarray}
Here we used the substitution $D':=D-E_{k,i}+E_{k,j}$.
The proof for $f_{A_{i\to j}A}$ is similar.
\end{proof}

\begin{theorem}
Let $t',t''$ be semistandard $\lambda$-tableau and let $D':=D(t',t_\lambda), D'':=D(t'',t_\lambda)$. Then
\begin{equation}
\sum_{D\in P(n,p)} x^D\inner{A_De_{t'}}{e_{t''}}=q\cdot \prod_{i>j} (D'_{i,j}!D''_{i,j}!)^{-1},
\end{equation}
where
\begin{equation}
q:=\prod_{j=1}^{p-1}\prod_{i=j+1}^{p} ((d_{i\to j})^{D''_{i,j}}(d^*_{j\to i})^{D'_{i,j}})\circ P_\lambda.
\end{equation}
\end{theorem}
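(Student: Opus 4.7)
The plan is to realize $e_{t'}$ and $e_{t''}$ as explicit products of raising operators $A_{i\to j}$ applied to $e_\lambda$, and then to peel matching factor-pairs off $Q''(e_\lambda e_\lambda^\transp)(Q')^\transp$ one at a time via Proposition \ref{Aijonpoly}. First I would note that since $t'$ is semistandard, the strict increase of its columns forces the entry in row $i$ to be at least $i$, so $D':=D(t',t_\lambda)$ satisfies $D'_{k,i}=0$ for $k<i$ and is therefore lower triangular with $(D')^\transp\one=\lambda$; the same holds for $D''$. The preceding lemma, together with the identity $t(D')=t'$ for semistandard $t'$, then gives $e_{t'}=A_{D'}e_\lambda$ and $e_{t''}=A_{D''}e_\lambda$.

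Next I would apply Proposition \ref{Decompose} together with $I_\lambda e_\lambda=e_\lambda$ (which follows from the lemma applied to $\Diag(\lambda)$) to obtain
\begin{equation*}
e_{t'}=\frac{Q'e_\lambda}{\prod_{i>j}D'_{i,j}!},\qquad e_{t''}=\frac{Q''e_\lambda}{\prod_{i>j}D''_{i,j}!},
\end{equation*}
where $Q':=\prod_{j=1}^{p-1}\prod_{i=j+1}^{p}A_{i\to j}^{D'_{i,j}}$ and $Q''$ is defined analogously. Using the identity $e_\lambda^\transp X e_\lambda=\inner{X}{e_\lambda e_\lambda^\transp}$ together with the cyclicity of the trace,
\begin{equation*}
\inner{A_De_{t'}}{e_{t''}}=\frac{1}{\prod_{i>j}D'_{i,j}!\,D''_{i,j}!}\inner{A_D}{Q''e_\lambda e_\lambda^\transp(Q')^\transp},
\end{equation*}
so the problem reduces to evaluating $f_{Q''e_\lambda e_\lambda^\transp(Q')^\transp}$ and identifying it with $q$.

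Now I would peel factor-pairs off both ends simultaneously. Writing $Q''=A_{2\to1}^{D''_{2,1}}\tilde Q''$ and $(Q')^\transp=(\tilde Q')^\transp A_{1\to2}^{D'_{2,1}}$ for the first pair $(i,j)=(2,1)$, Proposition \ref{Aijonpoly} applied iteratively yields
\begin{equation*}
f_{A_{i\to j}^{k}\,N\,A_{j\to i}^{l}}=(d_{i\to j})^{k}(d^*_{j\to i})^{l}\,f_N,
\end{equation*}
where the two possible orders of $d$- versus $d^*$-applications agree since both equal $f_{A_{i\to j}^k N A_{j\to i}^l}$ by associativity of matrix multiplication. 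Recursing through the pairs $(i,j)$ in the order of the nested product $\prod_{j=1}^{p-1}\prod_{i=j+1}^p$ then produces exactly the interleaved operator expression defining $q$, applied to $P_\lambda=f_{e_\lambda e_\lambda^\transp}$, which gives the claimed identity after dividing by the factorial factors.

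The main obstacle I foresee is bookkeeping: the transpose reverses the factor order of $Q'$, so one must verify that peeling matching pairs from the outside inward correctly produces operators in the order specified by the nested product in $q$. The associativity-based derivation above conveniently sidesteps the need for a direct commutation check $[d_{i\to j},d^*_{j'\to i'}]=0$, which nonetheless holds on the image of $f$ and is what makes the two orderings within each $(d,d^*)$-pair interchangeable.
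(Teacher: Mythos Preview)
Your proposal is correct and follows essentially the same route as the paper: both proofs use the lemma to rewrite $e_{t'}=A_{D'}e_\lambda$ and $e_{t''}=A_{D''}e_\lambda$, invoke Proposition~\ref{Decompose} (together with $I_\lambda e_\lambda=e_\lambda$, equivalently $I_\lambda e_\lambda e_\lambda^\transp I_\lambda=e_\lambda e_\lambda^\transp$) to factor $A_{D'}$ and $A_{D''}$ into products of raising operators, and then apply Proposition~\ref{Aijonpoly} repeatedly to translate the resulting matrix product into the differential-operator expression $q$ acting on $P_\lambda=f_{e_\lambda e_\lambda^\transp}$. Your version is in fact a bit more careful than the paper's, since you make explicit why $D'$ and $D''$ are lower triangular and why the interleaved ordering of the $d$- and $d^*$-operators in $q$ is the one produced by peeling outer factors, whereas the paper leaves these points implicit.
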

\begin{proof}
We want to compute 
\begin{equation}
\sum_D x^D\inner{A_De_{t'}}{e_{t''}}=\sum_D x^D\inner{A_D}{A_{D''}e_\lambda e_\lambda^\transp A_{D'}^\transp}=f_{A_{D''}E_\lambda A_{D'}^\transp}.
\end{equation}
By Proposition (\ref{Decompose}) we obtain
\begin{eqnarray}
\prod_{i>j} (D'_{i,j}!D''_{i,j}!)\cdot A_{D''}E_\lambda A_{D'}^\transp&=&\prod_{j=1}^{p-1}\prod_{i=j+1}^p A_{i\to j}^{D''_{i,j}}I_\lambda E_\lambda I_\lambda (\prod_{j=1}^{p-1}\prod_{i=j+1}^{p} A_{i\to j}^{D'_{i,j}})^\transp\\
&=&\prod_{j=1}^{p-1}\prod_{i=j+1}^p A_{i\to j}^{D''_{i,j}} E_\lambda (\prod_{j=1}^{p-1}\prod_{i=j+1}^{p} A_{i\to j}^{D'_{i,j}})^\transp\nonumber
\end{eqnarray}
Using Proposition (\ref{Aijonpoly}) we derive
\begin{eqnarray}
\prod_{i>j} (D'_{i,j}!D''_{i,j}!)\cdot f_{A_{D''}E_\lambda A_{D'}^\transp}&=&\prod_{j=1}^{p-1}\prod_{i=j+1}^{p} ((d_{i\to j})^{D''_{i,j}}(d^*_{j\to i})^{D'_{i,j\ast}})\circ P_\lambda.
\end{eqnarray}
\end{proof}

\section{The general case}
Let $\B$ be a complex matrix algebra with basis $R_1,\ldots, R_s$, and let $\A:=\stensor{\B}{n}$. Recall that we have a basis $\{R_\mu\mid \mu\in \NN^s_n\}$ given by $R_\mu:=\sum_{x\in [s]^n\mid w(x)=\mu} \otimes_{i=1}^n R_{x_i}$.

Assume that a block diagonalization
\begin{equation}
\phi:\B\to \bigoplus_{i=1}^t \mat{p_i}{\C},\qquad B\mapsto \bigoplus_{i=1}^t \phi_i(B)
\end{equation}
is given. In the following, we will describe a block diagonalization of $\A$ in terms of $\phi$ and with respect to the basis $\{R_\mu\}$.

First observe that the isomorphism $\phi^{\otimes n}:\B^{\otimes n}\to (\oplus_{i=1}^t \mat{p_i}{\C})^{\otimes n}$ gives an isomorphism $\A\to \stensor{\oplus_{i=1}^t \mat{p_i}{\C}}{n}$ by restriction to $\A$. Now observe, that by removing multiple copies of identical blocks in $\stensor{\oplus_{i=1}^t\mat{p_i}{\C}}{n}$, we have an isomorphism 
\begin{equation}
\stensor{\oplus_{i=1}^t\mat{p_i}{\C}}{n}\to\oplus_{\mu\in\NN^t_n}\otimes \stensor{\mat{p_i}{C}}{\mu_i}
\end{equation}
Indeed, let $\{E_{(k,l)}\mid k,l=1,\ldots,m\}$ denote the standard basis of $\mathbb{C}^{m}$. This gives a basis $\{E_D \mid D\in \NN^{m\times m}_n\}$ given by $E_D:=\sum_{x\in ([m]\times [m])^n, w(x)=D} E_x$. Similarly, the standard basis $\{E_{(i,k,l}\mid i=1,\ldots,t, k,l=1,\ldots,p_i\}$ of $\oplus_{i=1}^t\C^{p_i\times p_i}$ gives a basis $\{E_D\mid D=D_1\oplus\cdots\oplus D_t\}$ of $\stensor{\oplus_{i=1}^t \C^{p_i\times p_i}}{n}$ where $E_D:=\sum_{x\in ([p_1]\times [p_1]\cup\cdots\cup [p_t]\times[p_t])^n\mid w(x)=D_1\oplus\cdots\oplus D_t}$. Then the isomorphism is given by
\begin{equation}
A_D\mapsto\oplus_\mu \delta_{\one^\transp D_1\one,\mu_1}\cdots\delta_{\one^\transp D_t\one,\mu_t}\otimes_{i=1}^t A_{D_i}.
\end{equation}
This yields an isomorphism 
\begin{equation}
\psi:\A\to \bigoplus_{\mu\in\NN^t_n}\bigotimes_{i=1}^t\stensor{\C^{p_i\times p_i}}{\mu_i}.
\end{equation}

To make this map explicit in terms of the given bases, we use the following lemma.
\begin{lemma}\label{basechange}
Let $V$ be a vector space with bases $\{u_1,\ldots,u_m\}$ and $\{v_1,\ldots,v_m\}$ that are related by $v_i=\sum_{j}c_{j,i}u_j$. Introducing indeterminates $x_1,\ldots,x_m$, the bases $(u_\mu)_{\mu\in \NN_n^m}$ and $(v_\nu)_{\nu\in\NN_n^m}$ of $\stensor{V}{n}$ are related by
\begin{equation}
v_\nu=\sum_{\mu}u_\mu\cdot (x_1c_{1,1}+\cdots+x_mc_{1,m})^{\mu_1}\cdots (x_1c_{m,1}+\cdots+x_mc_{m,m})^{\mu_t}[x^\nu].
\end{equation}
\end{lemma}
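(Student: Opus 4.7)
The strategy is to expand $v_\nu$ directly in the $u$-basis, and then recognize the resulting coefficients as extractions from a product of linear forms in the variables $x_1,\ldots,x_m$.

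First I would substitute $v_i = \sum_j c_{j,i}\, u_j$ into the definition $v_\nu = \sum_{a \in [m]^n,\, w(a) = \nu} v_{a_1} \otimes \cdots \otimes v_{a_n}$ and multiply out to obtain
\begin{equation}
v_\nu \;=\; \sum_{a:\, w(a) = \nu}\ \sum_{b \in [m]^n}\ \Bigl(\prod_{i=1}^n c_{b_i,a_i}\Bigr)\, u_b.
\end{equation}
The next step is to group the right-hand side by $\mu := w(b)$. For any $\sigma \in S_n$, the reindexing $a \mapsto \sigma a$ permutes the set $\{a : w(a) = \nu\}$ among itself, which shows that $\sum_{a: w(a) = \nu} \prod_i c_{b_i, a_i}$ is unchanged when $b$ is replaced by $\sigma b$. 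Hence the coefficient of $u_b$ depends only on $w(b)$; writing this common value as $\beta(\mu,\nu)$ and using $u_\mu = \sum_{b: w(b) = \mu} u_b$, I obtain $v_\nu = \sum_\mu \beta(\mu,\nu)\, u_\mu$.

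To evaluate $\beta(\mu,\nu)$ I would pick any convenient representative, for instance the $b$ consisting of $\mu_1$ copies of $1$ followed by $\mu_2$ copies of $2$, and so on. Introducing indeterminates $x_1,\ldots,x_m$, the identity
\begin{equation}
\sum_{a \in [m]^n}\Bigl(\prod_{i=1}^n c_{b_i,a_i}\, x_{a_i}\Bigr) \;=\; \prod_{i=1}^n \Bigl(\sum_{k=1}^m c_{b_i,k}\, x_k\Bigr) \;=\; \prod_{j=1}^m \Bigl(\sum_{k=1}^m c_{j,k}\, x_k\Bigr)^{\mu_j}
\end{equation}
holds, since for this $b$ each factor $j$ appears exactly $\mu_j$ times. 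Restricting the left-hand sum to $w(a) = \nu$ is the same as extracting the coefficient of $x^\nu := x_1^{\nu_1}\cdots x_m^{\nu_m}$ on the right, which yields $\beta(\mu,\nu) = [x^\nu] \prod_{j=1}^m (x_1 c_{j,1} + \cdots + x_m c_{j,m})^{\mu_j}$, exactly as stated.

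The only genuinely subtle point is the symmetrization step — confirming that the coefficient of $u_b$ in $v_\nu$ depends only on $w(b)$ — but this is essentially immediate from the $S_n$-invariance of $\{a : w(a) = \nu\}$. Beyond that, the argument is pure bookkeeping, and I do not anticipate any real obstacle.
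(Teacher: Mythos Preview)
Your proof is correct and uses essentially the same generating-function identity as the paper. The only organizational difference is that the paper works with the full generating function $\sum_{\nu} x^\nu v_\nu$ from the outset and manipulates it directly into $\sum_\mu u_\mu \prod_j\bigl(\sum_k x_k c_{j,k}\bigr)^{\mu_j}$, so the coefficient of each $u_b$ automatically depends only on $w(b)$ and your separate symmetrization step is absorbed into the computation.
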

\begin{proof}
We compute $\sum_{\nu\in\NN_n^m}x^\nu v_\nu$.
\begin{eqnarray}
\sum_{\nu\in\NN_n^m}x^\nu v_\nu&=&\sum_{y\in \{1,\ldots,m\}^n} \otimes_{i=1}^n x_{y_i}v_{y_i}\nonumber\\
&=&\sum_{y\in \{1,\ldots,m\}^n} \otimes_{i=1}^n x_{y_i}\left(\sum_{j=1}^m c_{j,i}u_j\right)\nonumber\\
&=&\sum_{y,z\in \{1,\ldots,m\}^n} \otimes_{i=1}^n x_{y_i}c_{z_i,y_i}u_{z_i}\nonumber\\
&=&\sum_{z\in \{1,\ldots, m\}^n} \otimes_{i=1}^n u_{z_i}\left(\sum_{j=1}^m x_jc_{z_i,j}\right)\nonumber\\
&=&\sum_{\mu}u_\mu \left(\sum_{j=1}^m x_jc_{1,j}\right)^{\mu_1}\cdots\left(\sum_{j=1}^m x_jc_{m,j}\right)^{\mu_m}.
\end{eqnarray}
\end{proof}

Application of the lemma yields the following result.

\begin{theorem}\label{mappsi}
The map $\psi$ is given by
\begin{equation}
\psi(\sum_{\nu}y_{\nu}A_{\nu})=\bigoplus_{\mu} \sum_{D=(D_1,\ldots,D_t)} y_D \bigotimes_{i=1}^t A_{D_i},
\end{equation}
where $\one^{\transp}D_i\one=\mu_i$. The coefficients $y_D$ are given by 
\begin{equation}
y_D=\sum_{\nu}y_\nu \left(\prod_{j=1}^t\prod_{k,l=1}^{p_j} \left(\sum_{i=1}^s x_i(\phi_j(R_i))_{k,l}\right)^{(D_{j})_{k,l}}\right)[x^\nu].
\end{equation}
\end{theorem}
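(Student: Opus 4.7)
The plan is to decompose $\psi$ as a composition of two explicit maps and to apply Lemma~\ref{basechange} to the nontrivial step. By construction, $\psi$ is the composition of $\phi^{\otimes n}$ restricted to $\A$ (an isomorphism onto $\stensor{\oplus_{i=1}^t \mat{p_i}{\C}}{n}$) with the regrouping isomorphism from $\stensor{\oplus_{i=1}^t \mat{p_i}{\C}}{n}$ to $\bigoplus_\mu \bigotimes_{i=1}^t \stensor{\mat{p_i}{\C}}{\mu_i}$ described in the paragraph preceding the theorem. The latter map sends $E_{D_1 \oplus \cdots \oplus D_t}$ to $\bigotimes_{i=1}^t A_{D_i}$ in the block indexed by $\mu_i = \one^\transp D_i \one$. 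Hence it suffices to expand the image $\phi^{\otimes n}(R_\nu)$ in the matrix-unit basis $\{E_D\}$.

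The key observation is that this expansion is nothing but the base change in $\B$ from the $R$-basis to the pullback matrix-unit basis $\{\phi^{-1}(E_{(j,k,l)})\}$, lifted to $\stensor{\B}{n}$. The change-of-basis relation in $\B$ is read off directly from $\phi(R_i) = \bigoplus_j \phi_j(R_i)$: namely, $R_i = \sum_{j,k,l} (\phi_j(R_i))_{k,l} \phi^{-1}(E_{(j,k,l)})$. I would then apply Lemma~\ref{basechange} with indeterminates $x_1, \ldots, x_s$ indexed by the $R_i$, treating the matrix-unit basis as the ``$u$-basis'' of the lemma. The triple index $(j,k,l)$ plays the role of the lemma's single index, and the weight $(D_j)_{k,l}$ plays the role of the lemma's $\mu_j$. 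Reading off the resulting generating function yields
\[
\sum_\nu y_\nu R_\nu \;=\; \sum_D y_D \bigotimes_{(j,k,l)} \bigl(\phi^{-1}(E_{(j,k,l)})\bigr)^{(D_j)_{k,l}}
\]
with $y_D$ precisely as stated in the theorem. Applying $\phi^{\otimes n}$ turns each pulled-back matrix unit into the corresponding $E_{(j,k,l)}$, hence turns the tensor-symmetric monomial in the display into $E_{D_1 \oplus \cdots \oplus D_t}$, and the regrouping isomorphism then produces $\bigotimes_{i=1}^t A_{D_i}$ in the block indexed by $\mu = (\one^\transp D_1 \one, \ldots, \one^\transp D_t \one)$, as claimed.

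The main (and really only) obstacle is bookkeeping: one must be careful to flatten the triple index $(j,k,l)$ into a single index in order to invoke Lemma~\ref{basechange}, and then unflatten it back into the three-index structure $(D_j)_{k,l}$ appearing in the theorem statement. The combinatorial content of the formula is already entirely encapsulated in Lemma~\ref{basechange}, and the rest of the argument is just composing explicit, basis-respecting isomorphisms.
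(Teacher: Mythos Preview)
Your proposal is correct and follows essentially the same approach as the paper: the paper's proof is the single sentence ``Expressing the basis $\{\psi(A_\nu)\}$ in terms of the basis $\{A_D\}$ using Lemma~\ref{basechange} yields the claimed result,'' and you have simply unpacked this by making the two constituent isomorphisms explicit and spelling out the index-flattening needed to invoke the lemma. The only cosmetic remark is that your displayed shorthand $\bigotimes_{(j,k,l)} \bigl(\phi^{-1}(E_{(j,k,l)})\bigr)^{(D_j)_{k,l}}$ should be read as the symmetric-tensor basis element of weight $D$ rather than a literal tensor product, but this is standard abuse and does not affect the argument.
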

\begin{proof}
Expressing the basis $\{\psi(A_\nu)\mid \nu\in \NN^s_n\}$ in terms of the basis $\{A_D\}$ using Lemma \ref{basechange} yields the claimed result. 
\end{proof}

Using the block diagonalization for each of the algebras $\stensor{\mat{p_i}{\C}}{\mu_i}$ as described in section \ref{subsec:blockdiag}, we obtain a block diagonalization 
\begin{equation}
\A\to\bigoplus_{\mu}\bigoplus_{\lambda_1,\ldots,\lambda_t} \bigotimes_{i=1}^t \mat{\sst{\lambda_i}{p_i}}{\C}
\end{equation}
of $\A$.

\section{Examples from coding theory}

\subsection*{Terwilliger algebra of the binary Hamming scheme}
The algebra $\A:=\stensor{\mat{2}{\C}}{n}$ is referred to in the literature as the Terwilliger algebra of the Hamming scheme. The matrices $D\in P(n,2)$ are usually indexed by $3$ parameters $i,j,t$ by setting 
\begin{equation}
D_{i,j}^t:=\begin{pmatrix}n+t-i-j&i-t\\j-t&t\end{pmatrix}, \quad A_{i,j}^t:=A_{D_{i,j}^t}.
\end{equation}
Here $0\leq t\leq i,j$ and $i+j-t\leq n$. In \cite{Schrijver} an explicit block diagonalization was given and used to compute semidefinite programming bounds for binary codes. Here we show that our method gives the same block diagonalization.

Since $p=2$, the partitions $\lambda=(n-k,k)$ are indexed by $k=0,\ldots,\lfloor \frac{n}{2}\rfloor$. Give $k$, the semistandard tableaux of shape $(n-k,k)$ are indexed by $i=k,\ldots, n-k$ by placing a $2$ in $i-k$ of the last $n-2k$ boxes of the first row and in all $k$ boxes of the second row of the tableau. Lets denote this tableau by $t_{k,i}$ and denote $e_{t_{k,i}}$ by $e_{k,i}$. We have $D(t_{k,i},t_{k,k})=\bigl(\begin{smallmatrix}n-i\\i-k&k\end{smallmatrix}\bigr)$. Hence for give $i,j$:
\begin{equation}
\sum_D x^D\inner{A_De_{k,i}}{e_{k,j}}=\frac{2^k}{(i-k)!(j-k)!}d_{2\to 1}^{j-k}(d^*_{1\to 2})^{i-k}(x_{2,2}x_{1,1}-x_{1,2}x_{2,1})^k x_{1,1}^{n-2k}.
\end{equation}
It is easy to see that $d_{2\to 1}(x_{2,2}x_{1,1}-x_{2,1}x_{1,2})=d^{*}_{1\to 2}(x_{2,2}x_{1,1}-x_{2,1}x_{1,2})=0$. Hence we obtain:
\begin{eqnarray}
\sum_D x^D\inner{A_De_{k,i}}{e_{k,j}}&=&\frac{2^k}{(i-k)!(j-k)!}(x_{2,2}x_{1,1}-x_{1,2}x_{2,1})^k d_{2\to 1}^{j-k}(d^*_{1\to 2})^{i-k} x_{1,1}^{n-2k}\\
&=& \frac{2^k \tbinom{n-2k}{i-k}}{(j-k)!}(x_{2,2}x_{1,1}-x_{1,2}x_{2,1})^k d_{2\to 1}^{j-k} x_{1,2}^{i-k}x_{1,1}^{n-k-i}\nonumber\\
&=&2^k \tbinom{n-2k}{i-k} (x_{2,2}x_{1,1}-x_{1,2}x_{2,1})^k \sum_{s=0}^{j-k}\tbinom{n-k-i}{s}\tbinom{i-k}{j-k-s}.\nonumber
\end{eqnarray} 
In this sum, only monomials $x^D$ with $D=D_{i,j}^t$ for some $t$, occur and they have coefficient
\begin{equation}
\inner{A_De_{k,i}}{e_{k,j}}2^k \tbinom{n-2k}{i-k} \sum_{s=0}^{j-k}\tbinom{n-k-i}{s}\tbinom{i-k}{j-k-s}\tbinom{k}{j-t-s}(-1)^{j-t-s}.
\end{equation}
Note that for $i\neq j$ the vectors $e_{k,i}$ and $e_{k,j}$ are orthogonal as they have disjoint support. Taking $t=i=j$ we see that
\begin{equation}
\inner{e_{k,i}}{e_{k,i}}=2^k \tbinom{n-2k}{i-k}.
\end{equation}
It follows that the block diagonalization is given by:
\begin{equation}
\psi: A_{i,j}^t \mapsto \bigoplus_{k=0}^{\lfloor\frac{n}{2}\rfloor} \left(\delta_{i',i}\delta_{j',j}\tbinom{n-2k}{i-k}^{-1/2}\tbinom{n-2k}{j-k}^{-1/2}\beta_{i,j,k}^t\right)_{i',j'=k}^{n-k},
\end{equation}
where
\begin{equation}
\beta_{i,j,k}^t:=(-1)^{j-t-s}\tbinom{n-2k}{i-k} \sum_{s=0}^{j-k}\tbinom{n-k-i}{s}\tbinom{i-k}{j-k-s}\tbinom{k}{j-t-s}.
\end{equation}

\begin{remark}
This is the same block diagonalization as was given by Schrijver\cite{Schrijver}, except that there a different expression (but the same value!) was used for $\beta_{i,j,k}^t$, namely
\begin{equation}
\beta_{i,j,k}^t=\sum_{u=0}^n (-1)^{u-t}\tbinom{u}{t}\tbinom{n-2k}{u-k}\tbinom{n-k-u}{i-u}\tbinom{n-k-u}{j-u}.
\end{equation}
\end{remark}

\subsection*{More algebras for binary codes}
Fix a positive integer $t$ and let $X:=\{0,1\}^t$. The symmetric group of two elements, $S_2$, acts on $X$ by exchanging the symbols $0$ and $1$ (in all $t$ positions). This induces an action of $S_2$ on the set of $X\times X$ matrices by simultaneous permutation of the rows and columns. Let $\B:=\{A\in\C^{X\times X}\mid \text{$A$ is $S_2$-invariant}\}$ be the algebra of matrices invariant under this action. The algebra $\A_n:=\stensor{B}{n}$ can be indentified with the set of matrices with rows and columns indexed by all $t\times n$ binary matrices $I$ that are invariant under all permutations of the indices induced by either permuting the $n$ rows of $I$ or by action of $S_2$ on any subset of the columns of $I$.

Since $\B\cong \C^{\{0,1\}^{t-1}\times \{0,1\}^{t-1}}\oplus \C^{\{0,1\}^{t-1}\times \{0,1\}^{t-1}}$ by sending $\begin{pmatrix}A&B\\B&A\end{pmatrix}\to A+B\oplus A-B$, Theorem... gives an explicit block diagonalization of $\A_n$. The simplest case, $t=1$ gives a diagonalization of the Bose-Mesner algebra of the Hamming cube. In the case $t=2$ we see that $\A_n\cong \oplus_{i=0}^n \stensor{\C^{2\times 2},i}\otimes\stensor{\C^{2\times 2}}{n-i}$, a direct sum of tensor products of Terwilliger algebras for the Hamming scheme.
  
\subsection*{Terwilliger algebra of the nonbinary Hamming scheme}
Let $q\geq 3$ be an integer and let $\B\subset \mat{q}{\C}$ be the set of matrices with rows and columns indexed by $\{0,1,\ldots,q-1\}$ that are invariant under simultaneous permutation of the rows and columns in $\{1,\ldots, q-1\}$. The algebra $\B$ is easily seen to have dimension $5$, where $B_1,\ldots, B_5$ form a basis by defining
\begin{equation}
\sum_{i=1}^5 x_iB_i:=
\left(\begin{smallmatrix}
x_1&x_2&\cdots&\cdots&x_2\\
x_3&x_4&x_5&\cdots&x_5\\
\vdots&x_5&\ddots&\ddots&\vdots\\
\vdots&\vdots&\ddots&\ddots&x_5\\
x_3&x_5&\cdots&x_5&x_4
\end{smallmatrix}\right).
\end{equation}
A block diagonalization $\B\to \mat{2}{\C}\oplus\mat{1}{\C}$ is given by
\begin{equation}
\phi (x_1B_1+\cdots+x_5B_5)=\left(\begin{smallmatrix}x_1&x_2\sqrt{q-1}\\x_3\sqrt{q-1}&x_4+x_5(q-2)\end{smallmatrix}\right)\oplus\left(\begin{smallmatrix}x_4-x_5\end{smallmatrix}\right)
\end{equation}   
for $x_1,\ldots,x_5\in \C$. 

The algebra $\A_{q,n}:=\stensor{\B}{n}$ is known as the Terwilliger algebra of the nonbinary Hamming scheme. It can be used for deriving bounds on the size of nonbinary codes from semidefinite programming, see \cite{Gijswijt}.

Applying Theorem \ref{mappsi}, we obtain a block diagonalization of $\A_{q,n}$ given by
\begin{eqnarray}
\psi(A_\nu)&=&\bigoplus_{w=0}^n \sum_{D=\left(\begin{smallmatrix}a&b\\c&n-w-a-b-c\end{smallmatrix}\right)\oplus \left(\begin{smallmatrix}w\end{smallmatrix}\right)} y_D A_{\left(\begin{smallmatrix}a&b\\c&n-w-a-b-c\end{smallmatrix}\right)},\\
y_D&=&x_1^ax_2^bx_3^c(x_4+(q-2)x_5)^{n-w-a-b-c}(x_4-x_5)^{w}(q-1)^{\frac{b+c}{2}}[x^\nu].
\end{eqnarray} 
Restricting to $D$ with $y_D\neq 0$ we obtain:
\begin{equation}
\psi (A_\nu)=\bigoplus_{w=0}^n A_{\left(\begin{smallmatrix}\nu_1&\nu_2\\ \nu_3&\nu_4+\nu_5-w\end{smallmatrix}\right)}\sum_g \tbinom{\nu_4+\nu_5-w}{g}\tbinom{w}{\nu_5-g}(q-2)^g (-1)^{\nu_5-g}(q-1)^{\frac{\nu_2+\nu_3}{2}}.
\end{equation} 

Using the block diagonalization of $\stensor{\mat{2}{\C}}{n-w}$, we obtain a block diagonalization of $\A_{q,n}$. This agrees with the block diagonalization found in \cite{Gijswijt}.

\section*{Acknowledgments}
I want to thank Jan Draisma, Lex Schrijver and Frank Vallentin for helpful comments and discussions.

\end{document}